\documentclass{article}
\usepackage{amsmath, amssymb, amsthm}
\usepackage{graphicx}
\usepackage{geometry}
\usepackage{tikz}
\usepackage[utf8]{inputenc}
\usepackage{cite}
\usepackage{subcaption}
\usepackage[normalem]{ulem}
\usepackage{hyperref}
\usepackage{pgfplots}
\pgfplotsset{compat=1.16} 
\usepackage{color} 
\usepackage{mathtools} 
\usepackage{algorithm}
\usepackage{algpseudocode}
\hypersetup{
    colorlinks=true,
    linkcolor=black,
    citecolor=black,
    urlcolor=black
}

\usetikzlibrary{
    positioning,    
    fit,            
    shapes.geometric, 
    shapes.misc,    
    decorations.pathmorphing 
}

\newtheorem{theorem}{Theorem}
\newtheorem{lemma}{Lemma}
\newtheorem{claim}{Claim}
\newtheorem{proposition}{Proposition}
\theoremstyle{definition}
\newtheorem{definition}{Definition}

\newtheorem{remark}{Remark}

\newcommand{\defi}{\mathrm{defi}}

\newcommand{\mc}{\mathcal{C}}

\newcommand{\sat}{\mathrm{sat}}
\newcommand{\Sat}{\mathrm{Sat}}

\title{Universal Vertices and Saturation Numbers for Disjoint Triangles}
\author{%
  Xiaoteng Zhou\thanks{Corresponding author. Email: \texttt{zxt@amp.i.kyoto-u.ac.jp}.}\\
  Graduate School of Informatics, Kyoto University, Japan\\
  \texttt{zxt@amp.i.kyoto-u.ac.jp}
  \and
  Naoyuki Kamiyama\\
  Graduate School of Informatics, Kyoto University, Japan\\
  \texttt{kamiyama@amp.i.kyoto-u.ac.jp}
}

\begin{document}

\maketitle

\begin{abstract}
A graph \(G\) is \(\mathcal F\)-saturated if \(G\) contains no member of
\(\mathcal F\), but the addition of any non-edge creates a copy of a
member of \(\mathcal F\).  For \(m\ge 1\), let \((m+1)K_3\) denote the
vertex-disjoint union of \(m+1\) triangles. 
In this paper, 
we study \((m+1)K_3\)-saturated graphs.
We construct a family of \((m+1)K_3\)-saturated graphs which gives
the uniform upper bound
$\sat((m+1)K_3,n)=O(n^{3/2})$
for all \(m\ge 1\) and \(n\ge 3m+3\).
For general $(m+1)K_p$-saturated graphs, 
Faudree et al. determined $\sat((m+1)K_p, n)$ for sufficiently large $n$.
In the case of $p=3$, 
we lower Faudree's threshold from $n \ge 12m + 3$ to \(n\ge 9m+5\) and prove that
$\sat((m+1)K_3,n)=n+6m-1.$
We also provide two structural restrictions on the components obtained after
deleting all vertices of degree $|V(G)|-1$ from an \((m+1)K_3\)-saturated graph.
\end{abstract}
\medskip
\noindent\textbf{Keywords.}
$(m+1)K_3$-saturated graphs; saturation number; universal vertices; extremal graph theory.

\section{Introduction}
\label{intro}

A central theme of graph theory is the study of extremal problems,
originating with the work of Turán~\cite{Turan1941}. Given a family \(\mathcal F\) of graphs, 
the maximum number of edges in an \(n\)-vertex graph containing no member of
\(\mathcal F\) as a subgraph is denoted by
\[
    \operatorname{ex}(\mathcal F, n)
    =
    \max\{|E(G)|: |V(G)|=n,\ G\text{ contains no }F\in\mathcal F
    \text{ as a subgraph}\}.
\]

In this paper, we study the corresponding saturation problem. A graph
\(G=(V,E)\) is called \(\mathcal F\)-saturated if \(G\) contains no member
of \(\mathcal F\) as a subgraph, but for every edge
\(e\in E(\overline G)\), the graph \(G+e\) contains some member of
\(\mathcal F\) as a subgraph, where \(\overline G\) denotes the complement
of \(G\).

From the viewpoint of saturated graphs,  the \emph{extremal number} can equivalently be written
as
\[
    \operatorname{ex}(\mathcal F, n)
    =
    \max\{|E(G)|: |V(G)| = n \text{ and } G \text{ is } \mathcal{F}\text{-saturated}\}.
\]
The \emph{saturation number} is the corresponding minimum:
\[
    \operatorname{sat}(\mathcal F,n)
    =
    \min\{|E(G)|: |V(G)| = n \text{ and } G \text{ is } \mathcal{F}\text{-saturated}\}.
\]
Similarly,  $\Sat(\mathcal{F}, n)$ denotes the family of 
$n$-vertex $\mathcal{F}$-saturated graphs with exactly $\sat(\mathcal{F}, n)$ edges.

Hence, the classical Turán-type extremal problems and saturation problems may be viewed
as studying the two opposite extremal boundaries of the same class of graphs.

Faudree's survey~\cite{FFS2011} gives an overview of this area.
The study of \emph{saturation number} originated with complete graphs,
Erd\H{o}s, Hajnal and Moon~\cite{ErdosHajnalMoon1964} introduced the idea of \emph{saturation number} and determined $\sat(K_p,n)$ and $\Sat(K_p,n)$ for all integers $n$ and $p$. 
A natural extension is to consider the saturation problem for $(m+1)K_p$.
For the case $p=2$, Mader~\cite{Mader1973} first investigated and determined the structure of $(m+1)K_2$-saturated graphs. 
Based on Mader's work, K\'{a}szonyi and Tuza~\cite{KT1986} determined $\sat((m+1)K_2,n)$ and $\Sat((m+1)K_2,n)$ for $n\ge 3m$, 
later, Zhang, Lu and Yu~\cite{ZhangLuYu2023} strengthened this result.
For general $p$, Faudree~\cite{FAUDREE20095870} determined $\sat((m+1)K_p,n)$ and $\Sat((m+1)K_p,n)$ for sufficiently large $n$. 
More recently, Chen et al.~\cite{CHEN2024113868} studied the \emph{saturation number} of $K_p\cup (m-1)K_q$ for $m\ge 3$ and $2\le p<q$, 
while Zhu et al.~\cite{ZHU2025114530} further determined $\sat(K_p\cup (m-1)K_q,n)$ for $q\ge p\ge 2$ and $m\ge 2$, 
again both for sufficiently large $n$.

While determining the exact value of $\sat(\mathcal{F},n)$ is often difficult,
many studies instead aim to obtain estimates~\cite{HE2023148, CAO2023108}.
In particular, K\'{a}szonyi and Tuza~\cite{KT1986} proved that for every graph $H$ and sufficiently large $n$, 
there exists a constant $c=c(H)$ such that
\[
\sat(H,n)<cn.
\]

In this paper, we study \((m+1)K_3\)-saturated graphs. 
We assume that \(n\ge 3m+3\), since otherwise no graph obtained by adding a
single edge can contain \(m+1\) vertex-disjoint triangles.  Under this
assumption, an \((m+1)K_3\)-saturated graph cannot be complete, because
\(K_n\) itself contains \(m+1\) vertex-disjoint triangles.
Our main results are as follows.

We first introduce in Section~\ref{sec:upperBound} a family of
\((m+1)K_3\)-saturated graphs \(H(z)\), which gives a uniform
upper bound for all admissible values of \(m\) and \(n\).
\begin{theorem}\label{thm:saturation-bound}
For all integers \(m\ge 1\) and \(n\ge 3m+3\), 
$\sat((m+1)K_3,n)= O(n^{3/2}).$
\end{theorem}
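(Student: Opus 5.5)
The plan is to give an explicit construction and split into two regimes of $m$ relative to $n$. When $m\le \sqrt n$ I will take the classical graph $K_{3m-1}\vee \overline{K}_{n-3m+1}$, the join of a clique on $3m-1$ vertices with an independent set on the remaining vertices. Every triangle in it uses at least two clique vertices, so it contains at most $m$ disjoint triangles, i.e.\ it is $(m+1)K_3$-free. Adding an edge $xy$ inside the independent set creates the triangle $xyc$ for a clique vertex $c$. The remaining $3m-2$ clique vertices still hold $m-1$ disjoint triangles, each using two clique vertices and one fresh independent vertex, and this needs $n\ge 3m+3$. So this graph is $(m+1)K_3$-saturated, and it has at most $(3m-1)n\le 3n^{3/2}$ edges.

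The real work is the regime $m>\sqrt n$, where I will build the family $H(z)$. Take $z\approx\sqrt n$ universal vertices $U$, and on the rest put a graph $R$ that is a disjoint union of cliques $Q_1,\dots,Q_r$, with $|Q_i|=3k_i+2$ and every $|Q_i|=O(\sqrt n)$, padded by a few extra vertices. Inside $U\vee R$ a maximum triangle packing should look as follows: each $Q_i$ contributes $k_i$ triangles and leaves a spare edge, and each universal vertex completes one spare edge to a triangle. The intended packing number is therefore $\sum_i k_i+\min(z,r)$, and I choose $r>z$ so that some spare edges stay unused. Adding an edge $ab$ with $a\in Q_i$ and $b\in Q_j$ lets the leftover pair of $Q_i$ together with $b$ form a triangle, which frees one more unit. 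A direct computation then shows that the packing number rises by exactly one. I will tune the $k_i$, $r$, $z$ and the padding so that the base count is exactly $m$. The edge count is $z n+\sum_i\binom{|Q_i|}{2}\le n^{3/2}+O(\sqrt n)\cdot n=O(n^{3/2})$.

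The steps, in order, are as follows.

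\textbf{(i)} Prove an upper bound on the triangle packing number of $U\vee R$ by a counting or weight argument. A triangle inside $Q_i$ uses three of its vertices; a triangle meeting $U$ uses at least one vertex of $U$. A triangle can meet at most one $Q_i$ in two or more vertices, since there are no $R$-edges between cliques. Charging each triangle to $U$ or to the $Q_i$ in which it meets at least two vertices should give the bound $\sum_i k_i+\min(z,r)$.

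\textbf{(ii)} Exhibit a packing attaining this bound, so that the count is exactly $m$.

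\textbf{(iii)} Check the only type of non-edge, namely an edge between two different cliques (or involving padding vertices), and show it yields $m+1$ disjoint triangles.

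\textbf{(iv)} Handle divisibility and residual vertices so that every admissible pair $(m,n)$ with $m>\sqrt n$ and $n\ge 3m+3$ is realised.

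I expect step (iv) together with the saturation check in (iii) to be the main obstacle. The padding vertices not lying in $2\bmod 3$ cliques must be placed so that every non-edge touching them still raises the packing number without stealing a universal vertex needed elsewhere. My first attempt is to make the padding itself a clique of size $\equiv 2 \pmod 3$, or to merge it into an existing clique while adjusting one $k_i$. I will also use the slack $r>z$: there is at least one unused spare edge, and this spare edge is what lets an added edge from a padding vertex be absorbed.
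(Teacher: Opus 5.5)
Both of your constructions fail as stated, and the second failure is a genuine gap.

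In the regime $m\le\sqrt n$, your graph $K_{3m-1}\vee\overline{K}_{n-3m+1}$ has the wrong clique order. The observation that every triangle uses two clique vertices only gives $\nu\le\lfloor(3m-1)/2\rfloor$. This bound is attained: pair up the clique vertices and give each pair its own independent vertex. For $m\ge 3$ it exceeds $m$; for example, $K_8\vee\overline{K}_{n-8}$ contains $4K_3$ for every $n\ge 12$. Your saturation count is also one short, since $xyc$ plus $m-1$ further triangles is only $m$ triangles. For $m=1$ the graph $K_2\vee\overline{K}_{n-2}$ really is not $2K_3$-saturated. The right graph is $K_{2m+1}\vee\overline{K}_{n-2m-1}$, which is the paper's $H(m)$. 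There every triangle needs two of the $2m+1$ clique vertices, so $\nu=m$, and after $xyc$ the remaining $2m$ clique vertices give $m$ further triangles. It has at most $(2m+1)n$ edges, so this part is easily repaired.

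The regime $m>\sqrt n$ has the real gap. Joining cliques of order $3k_i+2$ to $U$ does not produce a saturated graph, and your step (iii) is false. For $a\in Q_i$ and $b\in Q_j$ with $i\neq j$, we have $N_G(a)\cap N_G(b)\subseteq U$. In particular, $b$ and the leftover pair of $Q_i$ do not form a triangle, because $a$ is $b$'s only neighbour in $Q_i$. So every triangle of $G+ab$ either lies inside one clique or meets $U$, and your charging argument from step (i) applies verbatim to $G+ab$. At most $z$ disjoint triangles meet $U$, and a clique of order $3k_l+2$ holds at most $k_l$ disjoint triangles. Hence $\nu(G+ab)\le z+\sum_l k_l$, plus the same contribution from any padding cliques. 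This is exactly the value you already realise in $G$ when $r\ge z$. The new triangle $\{a,b,u\}$ only displaces the triangle that $u$ was forming with a spare edge, so the slack $r>z$ cannot help. No choice of padding fixes this, because the failing non-edges join two of the $Q_i$ themselves.

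The missing idea is the paper's parity mechanism. Join cliques of order $3t_i+1$ to an \emph{odd} clique $K_{2z+1}$, using $r\ge z+2$ cliques. Every triangle beyond the $\sum t_i$ internal ones consumes two universal vertices; this is what the inequality $\lfloor(1+s)/3\rfloor\le s/2$ certifies in the paper's Proposition. So one universal vertex $z_0$ is always idle in $G$. An added edge $ab$ then uses it in the triangle $\{a,b,z_0\}$. Meanwhile the other $2z$ universal vertices still cover the leftover vertices of $z$ cliques other than those containing $a$ and $b$.

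Taking $r=n-3m+z-1$ and $\sum t_i=m-z$ makes the order exactly $n$, so your step (iv) disappears. With $z=\lfloor\sqrt n\rfloor$ and balanced $t_i$, each clique has order $O(\sqrt n)$. Your edge estimate $(2z+1)n+O(\sqrt n)\cdot n=O(n^{3/2})$ then goes through.
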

We then improve the threshold in~\cite{FAUDREE20095870} for $(m+1)K_3$-saturated graphs.
\begin{theorem}\label{thm:saturation-number}
    For all integers \(m\ge 1\), if \(n\ge 9m+5\), then $\sat((m+1)K_3,n)=n+6m-1.$
\end{theorem}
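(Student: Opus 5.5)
The proof has two halves: a construction giving $\sat((m+1)K_3,n)\le n+6m-1$, and a matching lower bound for every $(m+1)K_3$-saturated graph on $n\ge 9m+5$ vertices.

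For the upper bound I use the graph Faudree et al.\ identified for $p=3$. Take a clique $K_{3m+2}$, keep it as a separate component, and add a star $K_{1,n-3m-3}$ on the remaining $n-3m-2$ vertices. The clique contributes $\binom{3m+2}{2}$ edges, which is more than $n+6m-1$ in general, so this is not the right candidate. The count $n+6m-1$ instead suggests $2m$ universal-type vertices plus a near-tree.

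Concretely, I take a vertex $u$ joined to everything, together with $m$ disjoint triangles $T_1,\dots,T_m$ attached to $u$ and a matching-free remainder. The plan is to fix the construction as $K_1\vee\bigl(mK_3\cup (n-3m-1)K_1\bigr)$ and adjust it. This graph has $(n-1)+3m$ edges. It is $(m+1)K_3$-free: any triangle lies inside a $T_i$ or uses $u$, so at most $m+1$ disjoint triangles exist, and to get $m+1$ one of them must use $u$ with an edge outside the $T_i$'s, and no such edge exists. It is also saturated: adding an edge $xy$ among the isolated leaves creates the triangle $uxy$, giving $m+1$ disjoint triangles; adding an edge between a leaf and some $T_i$ gives a triangle with $u$ and one triangle edge, but then $T_i$ is destroyed. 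So the triangles need redundancy, namely copies of $K_4$ minus nothing, or $K_5$'s. I therefore replace each $T_i$ by a structure $B_i$ on few vertices that keeps one triangle after the loss of any one vertex and still supports only one disjoint triangle. Matching the count $3m+\dots$ to $6m$ means each block costs $6$ edges beyond a tree, which suggests $B_i=K_4$ on $4$ vertices with $u$ (so $4+6=10$ edges, against $4$ tree edges, an excess of $6$). I will verify that $K_1\vee(mK_4\cup(n-4m-1)K_1)$ has $(n-1)+6m$ edges, contains no $(m+1)K_3$ (each $K_4\cup u$ is a $K_5$, and two $K_5$'s sharing $u$ pack only $3$ triangles), and is saturated by a case check on non-edges: leaf–leaf, leaf–$K_4$, and $K_4$–$K_4$.

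For the lower bound I argue by structure. Let $G$ be saturated and let $U$ be the set of universal vertices. Saturation forces every non-edge $xy$ to lie in a triangle-packing of size $m+1$ through $xy$, so $G$ contains $mK_3$ and in fact many near-packings. I split into cases on $|U|$. If $U=\emptyset$, then every vertex has a non-neighbor, and saturation gives each vertex a path of length $2$ inside a structure supporting $m$ disjoint triangles avoiding it. This should force average degree large, at least $2+\Omega(1)$ beyond what is needed, using $n\ge 9m+5$ to beat $n+6m-1$. If $U\ne\emptyset$, delete $U$ and analyze the components of $G-U$. The component containing the triangles carries the deficiency, and trees or isolated vertices are forced elsewhere. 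Counting edges as $|U|(n-1)$ plus component excesses should give the bound, with equality only when $|U|=1$.

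The main obstacle is the $U=\emptyset$ case, together with controlling components of $G-U$ that contain triangles. There I need a discharging or deficiency argument showing that each "triangle-bearing" part costs at least $6$ edges over a spanning tree per triangle. This is also where the threshold $9m+5$ enters, since with roughly $9m$ vertices that cost can be absorbed by degree-based counting. I would first try bounding $\sum_v(\deg v-2)$ from below via the saturation condition on low-degree vertices.
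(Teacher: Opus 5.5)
Your upper bound is correct and matches the paper's. After the abandoned candidates you land on $K_1\otimes(mK_4\cup\overline K_{n-4m-1})$, which is $H(0)$, with $(n-1)+6m$ edges. Your check of the three kinds of non-edges works: use $u$ with the new edge and take the leftover triangle in every $K_4$. Your freeness argument as written is wrong, though. Two $K_5$'s sharing $u$ contain only $2$ disjoint triangles, and ``at most $3$'' would not exclude $3=m+1$ when $m=2$. The correct reason is that leaves lie in no triangle, at most one packed triangle contains $u$ and it takes two vertices of a single $K_4$ (leaving no triangle there), and each other block holds at most one triangle.

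The genuine gap is the lower bound, which you do not prove. Your split on $|U|$ comes only with statements of what ``should'' happen, and the first missing ingredient is a way to obtain a pendant vertex. The paper splits on $\delta(G)$ rather than on $|U|$:
\begin{itemize}
\item If $\delta(G)=d\ge 3$, every vertex outside $N[v]$ needs a neighbour in $N(v)$. This gives $2e(G)\ge d(n-d)+n-1$, hence $e(G)\ge 2n-5\ge n+6m$ for $n\ge 6m+5$.
\item If $\delta(G)=2$ with $N(v)=\{x,y\}$, every $w\in W=V\setminus\{v,x,y\}$ sees $x$ or $y$. Also $G[W]$ contains $m-\varepsilon$ disjoint triangles, where $\varepsilon=1$ iff $xy\in E$; this comes from the non-edge $vw$, or from $xy$ when it is a non-edge. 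Each vertex seeing exactly one of $x,y$ needs a further neighbour inside $W$. This count reaches $e(G)\ge n+6m$ once $n\ge 9m+5$, and this is where the threshold is used.
\item Hence $\delta(G)=1$, and the neighbour $z$ of a pendant vertex $v$ is universal. For every $w\in W=V\setminus\{z,v\}$, the forced triangle $vzw$ in $G+vw$ yields an $mK_3$ in $G[W\setminus\{w\}]$.
\end{itemize}
Your case $|U|=1$ does not supply this robustness, namely an $mK_3$ avoiding $z$ and any prescribed vertex. So you would still need the degree analysis there.

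The second missing ingredient is the heart of the proof: the bound $e_G(W)\ge 6m$, your ``6 edges per triangle''. It needs a genuine exchange argument. Fix triangles $T_1,\dots,T_m$ in $G[W]$ with vertex set $T$, put $R=W\setminus T$, and let $F$ be the non-triangle edges inside $T$. One shows two things:
\begin{itemize}
\item Each vertex of $T_i$ has a neighbour outside $V(T_i)\cup\{z\}$. Otherwise swapping in $\{z,x,a\}$ produces $m+1$ triangles.
\item No component of $F$ is a tree with no edge to $R$. Take a leaf $u_1$ ending a longest path, with $F$-neighbour $w\in T_2=\{w,s,t\}$. Saturation forces $u_2w,u_3w\in E(G)$ and a second leaf $u_2$. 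Then each of the three possible triangles $\{u_1,s,z\}$, $\{u_1,s,w\}$, $\{u_1,s,u_3\}$ on the non-edge $u_1s$ leads to an $(m+1)K_3$ in $G$.
\end{itemize}
These give $e(F)+e_G(T,R)\ge |T|=3m$, so $e(G)=(n-1)+e_G(W)\ge n-1+3m+3m$. Nothing in your sketch replaces this. A discharging bound on $\sum_v(\deg v-2)$ is proposed without any inequality that could deliver $6m$. Your picture of ``the component containing the triangles'' also misreads the extremal structure, where the triangles are spread over $m$ distinct $K_4$ components of $G-U$.
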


Finally, we give a structural observation for saturated graphs with universal
vertices.  
For a graph \(H\), let \(\nu(H)\) denote the maximum number of pairwise
vertex-disjoint triangles in \(H\).
The structural observation can be stated as follows.

\begin{theorem}\label{thm:main-structural}
Let \(G\) be an \((m+1)K_3\)-saturated graph, and let \(Z\neq\emptyset\)
be the set of vertices of degree \(|V(G)|-1\).  Suppose that \(G-Z\) has at
least two components.  Then every component \(C\) of \(G-Z\) satisfies
$|V(C)|-3\nu(C)\ge 1.$
Moreover, if
$|V(C)|-3\nu(C)=1,$
then \(C\) is complete.
\end{theorem}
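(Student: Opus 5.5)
The plan is to reduce everything to a bookkeeping identity for $\nu(G)$ in terms of the components of $G-Z$. Write $z=|Z|$ and let $C_1,\dots,C_k$ ($k\ge 2$) be the components of $G-Z$. Every triangle of $G$ meets at most one component, since there are no edges between different components. Hence a family of disjoint triangles splits into three kinds: triangles lying inside $Z$, and, for each component $C$, triangles meeting $C$. A triangle meeting $C$ has shape $C^3$, $C^2Z$ or $CZ^2$.

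For a component $C$ and an integer $j\ge 0$, let $f_C(j)$ be the maximum number of disjoint triangles in $G[V(C)\cup J]$ over $j$-subsets $J$ of $Z$. The paper's own $\nu$ is already defined for every graph, so $f_C(j)=\nu\bigl(G[V(C)\cup J]\bigr)$ for any such $J$; this does not depend on the choice of $J$, because $Z$ is complete and joined to all of $C$. Then
\[
m=\nu(G)=\max\Bigl\{\sum_i f_{C_i}(j_i)+\bigl\lfloor (z-\textstyle\sum_i j_i)/3\bigr\rfloor\Bigr\},
\]
where the maximum is over all $j_1,\dots,j_k\ge 0$ with $\sum_i j_i\le z$. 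Since $u,v$ lie in different components, any non-edge $uv$ between two components has all its common neighbours in $Z$. So saturation says: for such $u\in C$, $v\in C'$, there exist $w\in Z$ and $m$ disjoint triangles of $G-\{u,v,w\}$.

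For the first assertion, suppose $|V(C)|=3\nu(C)$, fix $v$ in another component $C'$, and choose $u\in C$. Let $P$ be the $m$ triangles of $G-\{u,v,w\}$ given by saturation. I would build $m+1$ disjoint triangles in $G$ by an exchange, contradicting $\nu(G)=m$. Delete from $P$ all triangles meeting $C$, and let $a,b,c$ count those of shapes $CZ^2$, $C^2Z$, $C^3$. Insert instead a perfect triangle packing of $C$, which needs no $Z$-vertices. This frees $2a+b$ vertices of $Z$, together with $w$ and $v$.

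The count $a+2b+3c\le |V(C)|-1$ together with $3\mid |V(C)|$ gives $\nu(C)\ge a+2b+c+\lceil (1-a-2b)/3\rceil$ roughly. The freed $Z$-vertices, $w$, and an edge at $v$ or spare $Z$-vertices should then supply the missing triangles. The delicate case is $a=b=0$, where no $Z$-vertex besides $w$ is freed. There I would exploit the freedom in choosing $u$. I would take $u$ inside a triangle $uxy$ of a maximum packing of $C$ and argue that $P$ cannot cover all of $C-u$ by $C^3$ triangles without forcing a packing of $C-u$ with $\nu(C)-1$ triangles that leaves $x,y$ unused. This lets us trade $uvw$ for $uxy$ and then reinsert $v,w$ elsewhere. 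Alternatively, swap the roles of $C$ and $C'$ and use that the $f_C$ are concave-like.

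For the second assertion, suppose $|V(C)|=3\nu(C)+1$ and $x,y\in C$ are non-adjacent. Saturation for $xy$ yields $m+1$ disjoint triangles in $G+xy$, one of which is $xyw$ with $w\in Z\cup V(C)$. Restricting to $C$ and $Z$, this means that $f_{C+xy}(j)>f_C(j)$ for the relevant $j$, while all other components are unaffected. I would show that for $|V(C)|\equiv 1 \pmod 3$ with a near-perfect packing, $f_C(j)$ is already as large as the trivial bound $\lfloor (|V(C)|+j)/3\rfloor$ allows for every $j\ge 2$. Indeed, the single uncovered vertex plus two $Z$-vertices form a triangle, and the remaining $Z$-vertices are grouped in threes, so there is no room for improvement when $j\ge 2$. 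The improvement must therefore happen with $j\in\{0,1\}$. For $j=1$, I would use the first part's optimal allocation to show that some component can donate a $Z$-vertex, which again yields $m+1$ triangles in $G$.

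The main obstacle I expect is the exchange step in the first assertion when $a=b=0$: showing that the extra triangle can be recovered purely from the structure of $C$, and this is where the choice of $u$ must be made carefully.
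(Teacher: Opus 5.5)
Your setup is the same as the paper's exchange argument. Every triangle meets at most one component of $G-Z$, and all vertices outside $C$ used by triangles meeting $C$ lie in the clique $Z\cup\{v\}$, which is joined to all of $C$. One then swaps those triangles for a maximum packing of $C$ plus triangles inside that clique. But in both parts you never carry out the one count that closes the argument, and the steps you put in its place are wrong or empty.

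In the first part, your bound $\nu(C)\ge a+2b+c+\lceil(1-a-2b)/3\rceil$ does not follow from $a+2b+3c\le|V(C)|-1$. For $a=2$, $b=0$ it asserts $\nu(C)\ge c+2$, whereas $3c+2\le 3\nu(C)-1$ only gives $\nu(C)\ge c+1$. The correct bookkeeping, with $t=\nu(C)$, is
\[
3(a+b+c)=(a+2b+3c)+(2a+b)\le 3t-1+(2a+b),
\]
so by integrality $a+b+c+1\le t+\lfloor(2a+b+2)/3\rfloor$. The $2a+b+2$ freed vertices ($w$, $v$ and the released $Z$-vertices) form a clique, so the exchange gives $m+1$ disjoint triangles in $G$ in every case. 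In particular $a=b=0$ is the easiest case, not the delicate one: $3c\le 3t-1$ forces $c\le t-1$, so the perfect packing of $C$ alone gains a triangle. The workaround you sketch there (choosing $u$ on a triangle $uxy$ and arguing about $P$ covering $C-u$ by $C^3$ triangles) is not a usable argument, since $|V(C)\setminus\{u\}|=3t-1$ makes such a cover impossible anyway, and it is not needed.

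In the second part, your reduction to $f_{C+xy}(j)>f_C(j)$ is fine. However, the claim that any improvement must occur at $j\in\{0,1\}$ is false. We have $f_C(0)=f_C(1)=t$ and $\lfloor(3t+1)/3\rfloor=\lfloor(3t+2)/3\rfloor=t$, so the trivial bound is attained there too. Hence $f_{C+xy}(j)\le\lfloor(|V(C)|+j)/3\rfloor=f_C(j)$ for \emph{every} $j\ge 0$, and your decomposition formula gives $\nu(G+xy)=m$ at once, contradicting saturation. Your ``donate a $Z$-vertex'' step is never specified, and $j=0$ is never addressed.

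The paper's proof is exactly this count. Take an $(m+1)K_3$-packing of $G+xy$ with $x\in C$, let $Q$ be its triangles meeting $C$, and let $S=V(Q)\setminus V(C)$. Then $S$ lies in $Z$ plus at most the other endpoint of the added edge, so it is a clique joined to $C$. One has $|Q|\le\lfloor(|V(C)|+|S|)/3\rfloor\le\nu(G[V(C)\cup S])$ whenever $\defi(C)\le 1$, and swapping $Q$ out gives $\nu(G)\ge m+1$. So the missing ingredient is small, but as written your proposal establishes neither assertion.
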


\section{Preliminaries}\label{sec:preliminary}

Throughout this paper, all graphs are finite, simple, and undirected.

For a positive integer \(t\), we write $[t]:=\{1,2,\ldots,t\}.$

Let \(G\) be a graph. We write \(V(G)\) and \(E(G)\) for the vertex set and
edge set of \(G\), respectively, and write $e(G):=|E(G)|.$
The complement of \(G\) is denoted by \(\overline G\). 
If a graph $H$ is isomorphic to $G$, we write \(G\cong H\).
If \(uv\notin E(G)\),
then \(G+uv\) denotes the graph obtained from \(G\) by adding the edge \(uv\). 
For \(X\subseteq V(G)\), we write \(G[X]\) for the
subgraph of \(G\) induced by \(X\), and write $G-X:=G[V(G)\setminus X].$
For two disjoint vertex sets \(X,Y\subseteq V(G)\), let \(e_G(X,Y)\) denote
the number of edges of \(G\) with one endpoint in \(X\) and the other endpoint
in \(Y\). We also write $e_G(X):=e(G[X]).$
The neighbourhood and closed neighbourhood of \(v\in V(G)\) are denoted by
$N_G(v):=\{u\in V(G):uv\in E(G)\},
N_G[v]:=N_G(v)\cup\{v\}.$
The degree of \(v\) in \(G\) is
$d_G(v):=|N_G(v)|$,
and the minimum degree of \(G\) is denoted by $\delta(G):=\min_{v\in V(G)}d_G(v).$
When the graph \(G\) is clear from the context, we simply write
\(N(v)\), \(N[v]\), and \(d(v)\).
Let $\mathcal C(G)$ denote the set of connected components of \(G\). When \(C\in\mathcal C(G)\),
we regard \(C\) as an induced subgraph of \(G\).
For pairwise vertex-disjoint graphs \(G_1,\ldots,G_s\), their union
\(G_1\cup\cdots\cup G_s\) is the graph with vertex set
\(\bigcup_{i=1}^s V(G_i)\) and edge set \(\bigcup_{i=1}^s E(G_i)\).
When we emphasize that this union is disjoint, we write
\(G_1\dot\cup\cdots\dot\cup G_s\).
For a positive integer \(t\) and a graph \(H\), we write \(tH\) for the
disjoint union of \(t\) copies of \(H\).

We denote by \(K_t\) the complete graph on \(t\) vertices and by
\(\overline K_t\) the empty graph on \(t\) vertices.

\begin{definition}[Join]
Let \(G\) and \(H\) be two vertex-disjoint graphs. The \emph{join} of \(G\)
and \(H\), denoted by
$G\otimes H,$
is the graph obtained from the disjoint union of \(G\) and \(H\) by adding all
edges between \(V(G)\) and \(V(H)\). Equivalently,
$V(G\otimes H)=V(G)\cup V(H)$
and
$E(G\otimes H)
=
E(G)\cup E(H)\cup \{uv:u\in V(G),\ v\in V(H)\}.$
\end{definition}

\begin{definition}[Packing]
Let \(G\) be a graph, and let \(\mathcal H\) be a family of graphs. An
\emph{\(\mathcal H\)-packing} in \(G\) is a collection
$\mathcal P=\{H_1,\ldots,H_t\}$
of pairwise vertex-disjoint subgraphs of \(G\) such that, for each
\(i\in[t]\), the graph \(H_i\) is isomorphic to some member of
\(\mathcal H\).

The maximum size of an \(\mathcal H\)-packing in \(G\) is called the
\emph{\(\mathcal H\)-packing number} of \(G\), and is denoted by
$\nu_{\mathcal H}(G).$
That is,
$\nu_{\mathcal H}(G)
=
\max\bigl\{
|\mathcal P|:
\mathcal P \text{ is an } \mathcal H\text{-packing in }G
\bigr\}.$
When \(\mathcal H=\{K_3\}\), we simply write
\(\nu(G):=\nu_{\{K_3\}}(G)\).
\end{definition}

\begin{definition}[Deficiency]
Let \(G\) be a graph. The \emph{\(K_3\)-packing deficiency} of \(G\) is
defined by
$\defi(G):=|V(G)|-3\nu(G).$
Equivalently, \(\defi(G)\) is the number of vertices left uncovered by a
maximum \(K_3\)-packing of \(G\). We say that \(G\) has a perfect
\(K_3\)-packing if \(\defi(G)=0\).
\end{definition}

\section{A Construction for $(m+1)K_3$-Saturated Graphs}\label{sec:upperBound}

In the early study of $(m+1)K_2$-saturated graphs, Mader~\cite{Mader1973} proved the following structural result.

\begin{theorem}[Mader~\cite{Mader1973}]\label{thm:Mader}
Let $G$ be a $(m+1)K_{2}$-saturated graph on $n$ vertices.
If $G$ is disconnected, then $G$ is a disjoint union of cliques, each of which has an odd number of vertices.
If $G$ is connected, then $G$ contains a vertex of degree $n-1$,
and the deletion of this vertex yields a $mK_{2}$-saturated graph.
\end{theorem}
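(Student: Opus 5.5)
We would derive Theorem~\ref{thm:Mader} from the Gallai--Edmonds structure theorem for maximum matchings. Let $G$ be $(m+1)K_2$-saturated on $n$ vertices, so the matching number is $\nu'(G)=m$. Following Gallai--Edmonds, let $D$ be the set of vertices missed by at least one maximum matching, $A$ the set of vertices outside $D$ adjacent to $D$, and $C$ the remaining vertices. Recall the standard facts:
\begin{itemize}
\item every component of $G[D]$ is factor-critical, so odd;
\item every maximum matching matches $A$ into distinct $D$-components, is near-perfect on each $D$-component and perfect on $C$;
\item $\mathrm{def}(G)=n-2m=c_D-|A|$, where $c_D$ is the number of $D$-components.
\end{itemize}
We may assume $n\ge 2m+2$, as in the setting of the paper (for $p=2$), so $G$ is not complete and $\mathrm{def}(G)\ge 2$. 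In particular $D\neq\emptyset$.

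\textbf{Step 1 (every vertex outside $D$ is universal).} Let $uv\notin E(G)$. By saturation, $G+uv$ has a matching of size $m+1$, which must use $uv$. Removing $uv$ gives a matching of size $m$ in $G-u-v$, i.e.\ a maximum matching of $G$ missing both $u$ and $v$. Hence both endpoints of every non-edge lie in $D$, so every vertex of $A\cup C$ is adjacent to all other vertices. Since $D\ne\emptyset$, a vertex of $C$ would be adjacent to $D$ and hence lie in $A$. Therefore $C=\emptyset$ and $A$ is exactly the set $Z$ of vertices of degree $n-1$.

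\textbf{Step 2 (each $D$-component is a clique).} Suppose $u,v$ are non-adjacent vertices of the same $D$-component $K$. Apply the Tutte--Berge formula to $G-u-v$ with barrier $A$. The component $K$ becomes $K-u-v$, which still has odd size $|K|-2$; all other odd components of $G-A$ are unchanged. So $o(G-u-v-A)\ge c_D$, and
\[
\nu'(G-u-v)\le \tfrac12\bigl(n-2-(c_D-|A|)\bigr)=\tfrac12(n-2-n+2m)=m-1.
\]
This contradicts Step 1, which requires a matching of size $m$ in $G-u-v$. Hence $G=K_{|A|}\otimes(K_{a_1}\dot\cup\cdots\dot\cup K_{a_s})$ with all $a_i$ odd.

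\textbf{Step 3 (the two cases).} If $G$ is disconnected, it has no universal vertex, so $A=\emptyset$ and $G$ is a disjoint union of odd cliques. This is the first assertion.

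If $G$ is connected, then $A\neq\emptyset$: otherwise $G$ would be a single clique, which is complete, contradicting $n\ge 2m+2$. Pick $x\in A$, which is universal. Since $x$ is covered by every maximum matching, $\nu'(G-x)=m-1$, so $G-x$ has no $mK_2$.

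For saturation of $G-x$, let $uv$ be a non-edge of $G-x$. Then $G+uv$ has a matching $M$ of size $m+1$. Deleting the edge of $M$ at $x$ (if any) leaves at least $m$ edges inside $G-x+uv$. Thus $G-x$ is $mK_2$-saturated, giving the second assertion.

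The main obstacle is Step~2, ruling out non-edges inside a single $D$-component. The parity-preserving Tutte--Berge count above is what we would rely on, and it must be checked carefully against the deficiency identity $n-2m=c_D-|A|$.
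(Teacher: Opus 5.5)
Your Gallai--Edmonds derivation is correct. The paper quotes this theorem from Mader without proof, so there is no in-paper argument to match it against.

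A few small points:
\begin{itemize}
\item The hypothesis $n\ge 2m+2$ is not just a convenience. As literally stated, the theorem fails for $n=2m+1$: $K_{2m+1}$ is vacuously $(m+1)K_2$-saturated and connected, yet deleting a vertex leaves $K_{2m}\supseteq mK_2$. You were right to impose it.
\item Step 2 is not really an obstacle. The matching of size $m$ in $G-u-v$ from Step 1 is a maximum matching of $G$ that leaves two vertices of the same $D$-component uncovered. That contradicts a fact you already listed: every maximum matching is near-perfect on each $D$-component. Tutte--Berge is not needed.
\item If you keep the Tutte--Berge count, note that $K-u-v$ need not be connected. What you actually use is that it has odd order, hence at least one odd component, which still gives $o(G-u-v-A)\ge c_D$.
\item Step 1 only shows $A\subseteq Z$, not $A=Z$. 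The reverse inclusion holds: a universal vertex missed by some maximum matching could be matched to a second missed vertex, since the deficiency is at least $2$. You never use it, though.
\end{itemize}

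The nearest analogue in the paper is Section 5. There, Lemma~\ref{lem:component-deficiency} (components of $G-Z$ have positive deficiency) and Lemma~\ref{lemma:completenessOf1compnenent} (deficiency-one components are complete) are the triangle versions of your facts that $D$-components are odd and are cliques. They are proved by direct exchange arguments relative to a \emph{given} nonempty universal set $Z$.

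For $K_2$, your structure-theorem route gives more. It produces the universal vertices themselves: Step 1 shows $A\cup C$ is universal, and connectivity forces $A\neq\emptyset$. It also gives exact deficiency one for each component, via factor-criticality. The paper obtains neither of these for triangles in general; its only route to a universal vertex is the degree-one case. The price is that your argument rests entirely on Gallai--Edmonds, which has no counterpart for $K_3$-packings, so it does not transfer to the setting the paper actually studies.
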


For general $p\ge 3$, Faudree et al.~\cite{FAUDREE20095870} observed the following construction, which can be written as
\[
G(n,p,m+1):=
K_{p-2}\otimes\Bigl(mK_{p+1}\cup \overline{K}_{\,n-p(m+1)-m+2}\Bigr),
\]
where $\overline{K}_{\,n-p(m+1)-m+2}$ is the empty graph on $n-p(m+1)-m+2$ vertices.
They showed that this construction is extremal for sufficiently large $n$.

\begin{theorem}[Faudree et al.~\cite{FAUDREE20095870}]\label{thm:Faudree}
Let $m \ge 0$, $p\ge 3$, and
$n\ge p(p+1)(m+1)-p^2+2p-6$
be integers. Then
\[
\sat((m+1)K_p,n)
=
\left|E\!\left(G(n,p,m+1)\right)\right|
=
m\binom{p+1}{2}
+\binom{p-2}{2}
+(p-2)(n-p+2).
\]
\end{theorem}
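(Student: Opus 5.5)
Since this is a cited result, I would only reconstruct the argument in outline, proving the two inequalities separately. For the upper bound I would verify that \(G(n,p,m+1)\) is \((m+1)K_p\)-saturated and count its edges. Write \(U\) for the \(K_{p-2}\), \(B_1,\dots,B_m\) for the copies of \(K_{p+1}\), and \(I\) for the independent set. The edge count is immediate:
\[
m\binom{p+1}{2}+\binom{p-2}{2}+(p-2)\bigl(m(p+1)+|I|\bigr)
=m\binom{p+1}{2}+\binom{p-2}{2}+(p-2)(n-p+2).
\]

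For \((m+1)K_p\)-freeness I would use the clique structure of the join. Every clique is a clique of \(U\) together with a clique of \(mK_{p+1}\cup\overline K\). The latter clique lies inside a single \(B_i\) or is a single vertex of \(I\). A vertex of \(I\) together with \(U\) has only \(p-1\) vertices, so every \(K_p\) meets exactly one block \(B_i\), in \(t\ge 2\) vertices, and uses \(p-t\) vertices of \(U\). Since \(2p>p+1\), two disjoint copies of \(K_p\) in the same block \(B_i\) would use at least \(2p-(p+1)=p-1>p-2=|U|\) universal vertices. Hence each block hosts at most one copy, giving at most \(m\) disjoint copies.

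For saturation I would split into three cases by the location of the added edge \(uv\).
\begin{itemize}
\item If \(u,v\in I\), then \(U\cup\{u,v\}\) is a \(K_p\) and the \(m\) blocks supply \(m\) further copies.
\item If \(u\in I\) and \(v\in B_i\), then \(U\cup\{u,v\}\) is a \(K_p\), \(B_i-v\) is a \(K_p\), and the other blocks supply \(m-1\) further copies.
\item If \(u\in B_i\) and \(v\in B_j\) with \(i\ne j\), then \(U\cup\{u,v\}\) is a \(K_p\), \(B_i-u\) and \(B_j-v\) are two more copies, and the other blocks supply \(m-2\) further copies.
\end{itemize}
In every case we obtain \(m+1\) disjoint copies of \(K_p\).

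For the lower bound I would induct on \(m\). The base case \(m=0\) is the Erd\H{o}s--Hajnal--Moon theorem \(\sat(K_p,n)=\binom{p-2}{2}+(p-2)(n-p+2)\). For the inductive step, let \(G\) be \((m+1)K_p\)-saturated on \(n\) vertices with fewest edges, and fix a maximum \(K_p\)-packing \(\mathcal P\). Then \(|\mathcal P|=m\): if it were smaller, adding any non-edge would create at most one new disjoint copy. Set \(S=\bigcup\mathcal P\) and \(R=V(G)\setminus S\), so that \(|S|=mp\) and \(G[R]\) is \(K_p\)-free.

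For a non-edge \(uv\) inside \(R\), saturation gives a \(K_p\) through \(uv\) plus \(m\) disjoint copies avoiding it. Each \(u\in R\) therefore has \(p-2\) neighbours forming a clique with a common neighbour of \(v\). Since \(n\) is large and we may assume \(e(G)\) is close to \((p-2)n\), most vertices of \(R\) have degree about \(p-2\). The first goal is to show that these low-degree vertices share a common set \(W\) of exactly \(p-2\) neighbours forming a clique. This is a counting argument: there are at most \(O(mp^2)\) high-degree vertices, and two low-degree vertices with different cores would force an extra edge per vertex.

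The main obstacle is the next step: proving that \(W\) is actually universal. I would then show that deleting a \(K_{p+1}\)-like block yields an \(mK_p\)-saturated graph on \(n-(p+1)\) vertices, or else directly bound \(e(G[V\setminus W])\ge m\binom{p+1}{2}\). My first attempt is as follows.
\begin{itemize}
\item Use saturation on non-edges between a low-degree vertex \(x\) and each vertex \(y\) outside \(N[x]\). The \(K_p\) created through \(xy\) must essentially use \(W\), so \(y\) is adjacent to all of \(W\), and \(W\) is universal.
\item Apply saturation to non-edges inside \(S\) to show that each vertex covered by \(\mathcal P\) has at least \(p\) neighbours in \(V(G)\setminus W\).
\item Charge these edges to the \(m\) copies in \(\mathcal P\), which gives at least \(m\binom{p+1}{2}\) edges outside the edges incident to \(W\).
\end{itemize}
The numeric threshold on \(n\) enters exactly in making the degree counting force a single common core \(W\).
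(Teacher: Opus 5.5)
The paper does not prove this theorem; it is quoted from Faudree et al. The only argument of this kind in the paper is the case $p=3$ (Propositions~\ref{prop:large-n-delta-at-most-two}--\ref{prop:large-n-excludes-delta-two} and Lemmas~\ref{lem:degree-one-implies-universal}, \ref{lemma:triangle-case-sharper-threshold}). Your upper-bound half is complete and correct; for $p=3$ it is Proposition~\ref{prop:typical-construction} with $z=0$. The lower bound, however, has two genuine gaps.

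\textbf{First gap: the universal clique $W$.} Your route to $W$ does not work.
\begin{itemize}
\item An upper bound $e(G)\approx(p-2)n$ only gives average degree about $2(p-2)$. So ``most vertices of $R$ have degree about $p-2$'' does not follow, and the counting of high-degree vertices and ``cores'' is never carried out.
\item ``About $p-2$'' would not suffice anyway. Your argument that every $y\notin N[x]$ is adjacent to all of $W$ needs $|N(x)|=p-2$ exactly. If $d(x)=p-1$, the $K_p$ through $xy$ may use different $(p-2)$-subsets of $N(x)$ for different $y$.
\end{itemize}
The missing step is to force $\delta(G)=p-2$, as Proposition~\ref{prop:large-n-delta-at-most-two} does for $p=3$. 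Always $\delta(G)\ge p-2$. Suppose $\delta(G)=d\ge p-1$ and $d(v)=d$. Every non-neighbour of $v$ sends at least $p-2$ edges into $N(v)$, and $N(v)$ contains a $K_{p-2}$, so
\[
2e(G)\ \ge\ 2d+2\binom{p-2}{2}+(d+p-2)(n-d-1).
\]
At $d=p-1$ this yields $e(G)\ge m\binom{p+1}{2}+\binom{p-2}{2}+(p-2)(n-p+2)$ exactly when $n\ge p(p+1)m+3p-6$, which is precisely the stated threshold. For $p-1\le d\le 2p-4$, concavity in $d$ reduces to the two endpoints. For $d\ge 2p-3$, the trivial bound $2e(G)\ge nd$ suffices. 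Once some $v$ has degree $p-2$, $N(v)$ is automatically a universal $(p-2)$-clique. Note also that the induction on $m$ you set up is never actually invoked.

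\textbf{Second gap: the final charging.} Even granting $W$, your count is short by a factor of two.
\begin{itemize}
\item The per-vertex statement is provable, but via the non-edge $vx$ from a degree-$(p-2)$ vertex $v$ (as in Claim~\ref{claim:lower-extra-neighbour}). The packing must also be chosen inside $G-W-v$, which saturation on $vy$ supplies; an arbitrary maximum packing fixed before $W$ is known may use vertices of $W$.
\item The statement only says that each covered vertex has one neighbour outside its own copy and $W$. An extra edge between two copies serves both endpoints, so you get $e(G-W)\ge m\binom{p}{2}+\lceil mp/2\rceil$.
\item What is needed is $m\binom{p+1}{2}=m\binom{p}{2}+mp$, i.e.\ one extra edge per covered vertex.
\end{itemize}
Closing this gap is the heart of the lower bound. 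For $p=3$ it is Claim~\ref{claim:lower-no-bad-tree-component}: the graph $F$ of non-packing edges on the covered set $T$ has no tree component without an edge to $R$, hence $e(F)+e_G(T,R)\ge |T|$. Its proof needs a further, delicate use of saturation: two leaves of one triangle hanging on the same vertex $w$, then a case analysis of the triangle through the added edge $u_1s$. Your sketch has no counterpart of this step.
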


We observe that, 
both in the connected case of Mader's constructions and in Faudree's construction, 
there exists at least one vertex adjacent to every other vertex of the graph. 
This motivates us to introduce the notion of a universal vertex. 

\begin{definition}[Universal vertex]
Let $G$ be a graph. A vertex $u\in V(G)$ is called a \emph{universal vertex} if
$N(u)=V(G)\setminus\{u\},$
or equivalently, if
$d(u)=|V(G)|-1.$
\end{definition}

Next, we introduce a family of \((m+1)K_3\)-saturated graphs \(H(z)\),
in which the parameter \(2z+1\) is precisely the number of universal vertices.
For every admissible value of \(z\), it gives $\sat((m+1)K_3,n)\le e(H(z)).$
This family includes, as the case \(z=0\), 
the construction of Faudree et al.~\cite{FAUDREE20095870} for \(p=3\).
In particular, when \(n\ge 12m+3\), 
the theorem of Faudree et al.~\cite{FAUDREE20095870} implies
$\sat((m+1)K_3,n)=e(H(0)).$

\subsection{A Family of Saturated Graphs}

We now describe the structure of $H(z)$.

For an integer \(z\) with \(0\le z\le m\), set
\(k:=n-3m\) and \(r:=k+z-1\).  Write
\(m-z=ar+b\), where \(0\le b<r\).
Define
\[
H(z):=
K_{2z+1}\otimes
\Bigl((r-b)K_{3a+1}\cup bK_{3a+4}\Bigr).
\]

To justify this construction, we first prove that a more general graph of the form
$K_{2z+1}\otimes \bigcup_{i=1}^r K_{3t_i+1}$
with $r \ge z+2$, 
is $(z + \sum_{i=1}^r t_i + 1)K_3$-saturated,
whose structure is illustrated in Figure~\ref{fig:structure1}.
The graph $H(z)$ defined above is a special case obtained by choosing the values of $t_1,\dots,t_r$ as evenly as possible.

  \begin{figure}[t]
\centering
\begin{tikzpicture}[
  box/.style={
    draw,
    rounded rectangle,
    minimum width=2.1cm,
    minimum height=1.0cm,
    align=center
  }
]

\node[box] (Z) at (6.0,2.0) {$Z$};

\node[box] (C1) at (1.5,0) {$K_{3t_1+1}$};
\node[box] (C2) at (4.5,0) {$K_{3t_2+1}$};
\node at (7.0,0) {\Large $\cdots$};
\node[box] (C3) at (9.5,0) {$K_{3t_{r}+1}$};

\draw (Z.south west) -- (C1.north);
\draw (Z.south) -- (C2.north);
\draw (Z.south east) -- (C3.north);

\draw[decorate, decoration={brace, mirror, amplitude=5pt}]
  (0.3,-0.9) -- (10.7,-0.9)
  node[midway, below=6pt] {$r$ components of $G-Z$};

\end{tikzpicture}
\caption{The graph
\(G=K_{2z+1}\otimes\bigcup_{i=1}^{r}K_{3t_i+1}\), where
\(Z=V(K_{2z+1})\).}
\label{fig:structure1}
\end{figure}

\begin{proposition}\label{prop:typical-construction}
Let $z\ge 0$ and let $\mathbf{t}=(t_1,\dots,t_r)$ be a sequence of nonnegative integers with
$r\ge z+2.$
Set
$m:=z+\sum_{i=1}^r t_i.$
Then $G = K_{2z+1}\otimes \bigcup_{i=1}^r K_{3t_i+1}$ is an $(m+1)K_3$-saturated graph.
\end{proposition}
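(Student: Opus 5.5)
We need to show two things: $G$ contains no $(m+1)K_3$, and adding any non-edge creates one. Write $Z=V(K_{2z+1})$ and $C_i\cong K_{3t_i+1}$ for $i\in[r]$. Every non-edge of $G$ joins two different cliques $C_i$ and $C_j$, because $Z$ is universal and each $C_i$ is complete.

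\emph{Step 1: $\nu(G)\le m$.} Take any family of vertex-disjoint triangles in $G$. Since there are no edges between distinct $C_i$, each triangle has one of three types:
\begin{itemize}
\item[(a)] it lies inside a single $C_i$;
\item[(b)] it uses exactly one vertex of $Z$ and two vertices of one $C_i$;
\item[(c)] it uses two or three vertices of $Z$ (any third vertex can be anywhere).
\end{itemize}
Let $s_i$ be the number of triangles meeting $C_i$ that contain at least two vertices of $C_i$, so they are of type (a) or (b). Since each such triangle uses at least $2$ vertices of $C_i$, a first attempt at bounding $s_i$ is not good enough. Instead I count vertices of $Z$. Let $x$ be the number of type (b) triangles and $y$ the number of type (c) triangles, so that $x+2y\le 2z+1$. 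Inside $C_i$, if $a_i$ triangles of type (a) and $b_i$ of type (b) are used, then $3a_i+2b_i\le 3t_i+1$. Hence $a_i+b_i\le t_i+b_i/3+1/3$, and more precisely $a_i+b_i\le t_i+\lfloor (b_i+1)/3\rfloor$. Summing over $i$ gives
\[
\#\text{triangles}\le \sum_i t_i+\sum_i\lfloor (b_i+1)/3\rfloor + y .
\]
Since $\lfloor(b+1)/3\rfloor\le b/2$ for all $b\ge 0$, this is at most $\sum_i t_i + x/2 + y\le \sum_i t_i + z + 1/2$. By integrality the count is at most $\sum_i t_i+z=m$. This counting step is the part I would double-check most carefully, in particular the bound $\lfloor (b+1)/3\rfloor\le b/2$. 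At $b=0$ the left side is $0$, at $b=1$ it is $0$, and at $b=2$ it is $1$, so it holds.

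\emph{Step 2: saturation.} Let $uv$ be a non-edge with $u\in C_i$ and $v\in C_j$, $i\ne j$. In $G+uv$, I build $m+1$ disjoint triangles as follows.
\begin{itemize}
\item Use one vertex $w\in Z$ to form the triangle $uvw$.
\item Then $C_i-u\cong K_{3t_i}$ and $C_j-v\cong K_{3t_j}$, which give $t_i+t_j$ triangles.
\item For each of the other $r-2$ cliques $C_\ell$, take $t_\ell$ triangles inside it, leaving one vertex $c_\ell$ uncovered.
\item The remaining $2z$ vertices of $Z$ are paired up, and each pair is joined with a distinct leftover vertex $c_\ell$ to form a triangle. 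This needs $z\le r-2$ leftover vertices, which is exactly the hypothesis $r\ge z+2$.
\end{itemize}
The total is $1+\sum_\ell t_\ell+z=m+1$ triangles, so $G+uv$ contains $(m+1)K_3$.

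I expect Step 1, the upper bound on the packing, to be the only real obstacle. The key idea there is that vertices of $Z$ used singly (type (b)) "pay" at a rate of at most one extra triangle per two $Z$-vertices, the same as pairs of $Z$-vertices (type (c)). Together with the odd size $2z+1$ of $Z$, this caps the extra triangles at $z$.
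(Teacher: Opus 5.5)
Your proposal is correct and follows essentially the paper's argument: you charge the extra triangles against the $2z+1$ vertices of $Z$ via $\lfloor (b+1)/3\rfloor\le b/2$ and integrality for the upper bound, and you use the identical explicit packing (one triangle $uvw$, $t_\ell$ triangles per clique, and $z$ pairs of $Z$ matched with leftover vertices of $z\le r-2$ other cliques) for saturation. The only difference is bookkeeping. You merge the paper's $\tau$ (triangles inside $Z$) and $\gamma_i$ (triangles with one vertex in $C_i$) into a single count $y$ of triangles using at least two vertices of $Z$, which slightly streamlines the count; your check of $\lfloor (b+1)/3\rfloor\le b/2$ should also note that it holds for all $b\ge 2$ because $(b+1)/3\le b/2$ there.
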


\begin{proof}
Write \(Z:=V(K_{2z+1})\), and let \(C_i\cong K_{3t_i+1}\) denote the
\(i\)-th clique, for \(i\in[r]\).

We first show that
$\nu(G)=m$.

For the lower bound, each clique $C_i\cong K_{3t_i+1}$ contains $t_i$ pairwise disjoint triangles. Since $r\ge z+2$, in particular $r\ge z$, we may choose one vertex from each of $z$ distinct components and combine them with $2z$ vertices of $Z$ to form $z$ further triangles. Hence
\[
\nu(G)\ge z+\sum_{i=1}^r t_i=m.
\]

For the upper bound, let $\mathcal{P}$ be an arbitrary $K_3$-packing in $G$. 
Let $\tau$ be the number of triangles of $\mathcal{P}$ contained entirely in $Z$. For each $i\in [r]$, let
\begin{itemize}
\item $\alpha_i$ be the number of triangles of $\mathcal{P}$ contained entirely in $C_i$,
\item $\beta_i$ be the number of triangles of $\mathcal{P}$ containing exactly two vertices of $C_i$,
\item $\gamma_i$ be the number of triangles of $\mathcal{P}$ containing exactly one vertex of $C_i$.
\end{itemize}
Then
$3\alpha_i+2\beta_i+\gamma_i\le 3t_i+1$
and
$3\tau+\sum_{i=1}^r(\beta_i+2\gamma_i)\le 2z+1.$
Set
$e_i:=\alpha_i+\beta_i+\gamma_i-t_i$,
$e_i$ measures how many more triangles of $\mathcal{P}$ meet $C_i$ than the baseline number $t_i$ of disjoint triangles that can be packed entirely inside $C_i$.
Then
$3e_i=3\alpha_i+3\beta_i+3\gamma_i-3t_i\le 1+\beta_i+2\gamma_i,$
and hence
$e_i\le \left\lfloor\frac{1+\beta_i+2\gamma_i}{3}\right\rfloor.$
Therefore
\[
|\mathcal{P}|
=
\tau+\sum_{i=1}^r(\alpha_i+\beta_i+\gamma_i)
=
\tau+\sum_{i=1}^r(t_i+e_i)
\le
\tau+\sum_{i=1}^r t_i+\sum_{i=1}^r \left\lfloor\frac{1+\beta_i+2\gamma_i}{3}\right\rfloor.
\]
Since $\left\lfloor\frac{1+s}{3}\right\rfloor\le \frac{s}{2}$
for all integers $s \ge 0$,
we obtain
\[
|\mathcal{P}|
\le
\tau+\sum_{i=1}^r t_i+\frac12\sum_{i=1}^r(\beta_i+2\gamma_i)
\le
\tau+\sum_{i=1}^r t_i+\frac12(2z+1-3\tau).
\]
Thus
$|\mathcal{P}|
\le
\sum_{i=1}^r t_i+z+\frac{1-\tau}{2}.$
Since $|\mathcal{P}|$ is an integer, it follows that
$|\mathcal{P}|\le \sum_{i=1}^r t_i+z=m.$

Combining the upper and lower bounds, we obtain
$\nu(G)=m.$

Now we prove that \(G\) is \((m+1)K_3\)-saturated.
Let \(u,v\) be a non-edge of \(G\). Since any vertex in \(Z\) is adjacent to all vertices
and each \(C_i\) is complete, we may assume that
\(u\in C_a\) and \(v\in C_b\) for two distinct indices \(a,b\in[r]\).

In \(G+uv\), choose a vertex \(z_0\in Z\). Then \(\{u,v,z_0\}\) is a
triangle. We now construct \(m\) further disjoint triangles avoiding
\(\{u,v,z_0\}\).

For each \(i\in[r]\), the clique \(C_i\cong K_{3t_i+1}\) contains \(t_i\)
pairwise disjoint triangles leaving any prescribed vertex uncovered. We
leave \(u\) uncovered in \(C_a\), leave \(v\) uncovered in \(C_b\), and
leave one arbitrary vertex uncovered in every other component. This gives
\(\sum_{i=1}^r t_i\) disjoint triangles.

After using \(z_0\), there remain \(2z\) vertices in \(Z\). Pair them into
\(z\) pairs. Since \(r\ge z+2\), there are at least \(z\) components other
than \(C_a\) and \(C_b\). Taking the uncovered vertices from \(z\) such
components together with the \(z\) pairs of vertices in \(Z\), we obtain
\(z\) additional disjoint triangles.

Thus \(G+uv\) contains
$1+z+\sum_{i=1}^r t_i=m+1$
pairwise vertex-disjoint triangles. By definition, \(G\) is \((m+1)K_3\)-saturated.
This completes the proof.
\end{proof}

It remains to show that, for every integer $n\ge 3m+3$, set $k = n - 3m \ge 3$, 
the above construction can indeed be carried out with suitable parameters. 
That is, we need to choose $z$ and nonnegative integers $t_1,\dots,t_{k+z-1}$ so that
\[
G = K_{2z+1} \otimes \bigcup_{i=1}^{k+z-1} K_{3t_i+1}
\]
has order $n$. Once this is done, Proposition~\ref{prop:typical-construction} implies that $G$ is an $(m+1)K_3$-saturated graph.

\begin{lemma}\label{lem:existence_Z_structure} 
    Let \(n,m\) be integers with \(n\ge 3m+3\), and set \(k:=n-3m\). 
    Then for every integer \(z\) with \(0\le z\le m\), there exist nonnegative integers \(t_1,\ldots,t_{k+z-1}\) such that \[ G=K_{2z+1}\otimes \bigcup_{i=1}^{k+z-1}K_{3t_i+1} \] has order \(n\) and is \((m+1)K_3\)-saturated. 
    Moreover, $|\mathcal C(G-Z)|=k+z-1\ge z+2$, where \(Z=V(K_{2z+1})\). 
\end{lemma}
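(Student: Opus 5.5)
We will reduce the statement to Proposition~\ref{prop:typical-construction}, so the only real work is choosing the parameters. Fix \(z\) with \(0\le z\le m\), set \(k:=n-3m\ge 3\) and \(r:=k+z-1\). Since \(k\ge 3\), we get \(r\ge z+2\), which is the final claim about \(|\mathcal C(G-Z)|\) once we know that \(G-Z\) has exactly \(r\) components. This holds because \(G-Z=\bigcup_{i=1}^r K_{3t_i+1}\) and every clique \(K_{3t_i+1}\) has at least one vertex.

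We will choose nonnegative integers \(t_1,\dots,t_r\) with \(\sum_{i=1}^r t_i=m-z\). This is possible because \(m-z\ge 0\) and \(r\ge 1\). Concretely, we take the balanced choice from the definition of \(H(z)\): write \(m-z=ar+b\) with \(0\le b<r\), and set \(t_i=a+1\) for \(b\) indices and \(t_i=a\) for the remaining \(r-b\) indices. Then \(z+\sum_i t_i=m\), so Proposition~\ref{prop:typical-construction} applies with \(r\ge z+2\) and shows that \(G\) is \((m+1)K_3\)-saturated.

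It remains to check the order:
\[
|V(G)|=(2z+1)+\sum_{i=1}^r(3t_i+1)=2z+1+3(m-z)+r=3m-z+1+(k+z-1)=3m+k=n.
\]

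There is no real obstacle here. The only point needing care is that the value of \(m\) produced by Proposition~\ref{prop:typical-construction}, namely \(z+\sum t_i\), agrees with the given \(m\). This is arranged by forcing \(\sum t_i=m-z\), and that choice is also what makes the vertex count come out to exactly \(n\).
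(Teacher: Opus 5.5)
Your proposal is correct and follows the paper's proof essentially verbatim: set $r=k+z-1\ge z+2$, pick nonnegative $t_i$ with $\sum_i t_i=m-z$, check that $|V(G)|=3m+k=n$, and invoke Proposition~\ref{prop:typical-construction}. Your explicit balanced choice of the $t_i$ (as in $H(z)$) and your remark that every clique is nonempty, so $G-Z$ has exactly $r$ components, are harmless refinements of the same argument.
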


\begin{proof}
Since \(n\ge 3m+3\), we have \(k=n-3m\ge 3\).
Fix an integer \(z\) with \(0\le z\le m\), and set \(r:=k+z-1\).
Then \(r\ge z+2\).  Since \(m-z\ge 0\), we may choose nonnegative integers
\(t_1,\dots,t_r\) such that
\[
    \sum_{i=1}^r t_i=m-z.
\]

For $  G=K_{2z+1}\otimes\bigcup_{i=1}^r K_{3t_i+1}$,
$|V(G)| = (2z+1)+\sum_{i=1}^r(3t_i+1)=2z+1+3(m-z)+(k+z-1)=3m+k=n$.
Moreover,
$|\mathcal C(G-Z)|=r=k+z-1\ge z+2.$

Hence, the chosen parameters satisfy
$z+\sum_{i=1}^r t_i=z+(m-z)=m$
and \(r\ge z+2\). Proposition~\ref{prop:typical-construction}
implies that \(G\) is an \((m+1)K_3\)-saturated graph.
\end{proof}

\subsection{Analysis of the Number of Edges}
Next we compute the number of edges of \(H(z)\).

Fix an integer \(z\) with \(0\le z\le m\), 
and consider graphs of the form 
$G=K_{2z+1}\otimes\bigcup_{i=1}^{k+z-1}K_{3t_i+1}$, with $\sum_{i=1}^{k+z-1}t_i=m-z.$ 
We first compute the number of edges of such a graph, 
and then minimize it with respect to \(t_1,\ldots,t_{k+z-1}\).

Since every vertex in $K_{2z+1}$ has degree $n-1$, and every vertex in a component $K_{3t_i+1}$ has degree
$3t_i+(2z+1),$
we have
\[
\begin{aligned}
2|E(G)|
&=
\sum_{v\in V(G)} d(v) \\
&=
(2z+1)(n-1)+\sum_{i=1}^{k+z-1}(3t_i+1)(3t_i+2z+1) \\
&= (2n-2)z + n-1 + (6z+6)\sum_{i=1}^{k+z-1} t_i + (2z+1)(k+z-1) + 9\sum_{i=1}^{k+z-1} t_i^2 \\
&= (2n-2)z + (n-1) + (6z+6)\left(\frac{n-k}{3}-z\right) + (2z+1)(k+z-1) + 9\sum_{i=1}^{k+z-1} t_i^2 \\
&=-4z^2+(4n-9)z+3n-k-2+9\sum_{i=1}^{k+z-1} t_i^2.
\end{aligned}
\]

Let
$S:=\sum_{i=1}^{k+z-1} t_i=\frac{n-k}{3}-z.$
For fixed $z$, the quantity $\sum t_i^2$ is minimized when the $t_i$ are as equal as possible. Write
$a:=\left\lfloor \frac{S}{k+z-1}\right\rfloor$, $b:=S-a(k+z-1)$
so that $0\le b<k+z-1.$
Then
\[
\min_{t_i}\sum_{i=1}^{k+z-1} t_i^2
=
b(a+1)^2+(k+z-1-b)a^2
=
\frac{S^2-b^2}{k+z-1}+b.
\]
Hence we have 
\[
\begin{aligned}
  &\min_{t_i}2|E(G)|  \\ 
  &= -4z^2 + (4n-9)z + 3n - k -2  + \frac{n^2 + k^2 - 2nk + 9z^2 - 6nz + 6kz}{k+z-1} + 9(b - \frac{b^2}{k+z-1})\\
  &= -4z^2 + (4n-9)z + 3n - k - 2 + 9z - 6n - 3k + 9 + \frac{n^2 + 4k^2 + 4nk - 6n - 12k + 9}{z+k-1} + 9(b - \frac{b^2}{k+z-1}) \\
  &= -4z^2 + 4nz + \frac{n^2 + 4k^2 + 4nk - 6n - 12k + 9}{z+k-1} - 3n - 4k + 7 + 9(b - \frac{b^2}{k+z-1}) \\
  &= -4z^2 + 4nz + \frac{(n-3)^2 + 4k(n-3) + 4k^2}{z+k-1} - 3n - 4k + 7 + 9(b - \frac{b^2}{k+z-1}) \\
  & = -4z^2 + 4nz + \frac{(n-3+2k)^2}{z+k-1} - 3n - 4k + 7 + 9(b - \frac{b^2}{k+z-1}). \\
\end{aligned}
\]
Write 
\[g(z) =  -4z^2 + 4nz + \frac{(n-3+2k)^2}{z+k-1} - 3n - 4k + 7 , \quad R(z) = 9(b - \frac{b^2}{k+z-1}).\]
Then $e(H(z)) = \frac 12 (g(z) + R(z))$,
Furthermore, we have the following trivial estimate for $R(z)$.
\begin{equation}\label{eq:es-for-R}
    0 \le R(z) \le R(z) |_{b = \frac{k+z-1}{2}} = \frac{9(k+z-1)}{4}.
\end{equation}

Now we can derive an upper bound by choosing $z=0$. Since $k = n - 3m$, 
\begin{equation}\label{eq:e-H0-formula}
\begin{aligned}
  2e(H(0)) &= \frac{(n + 2k -3)^2}{k-1} - 3n - 4k + 7 + R(0)\\
  &= \frac{(n-3+2(n-3m))^2}{n-3m-1} - 3n - 4(n-3m) + 7 + R(0) \\
    &= 9\frac{(n-3m-1 + m)^2}{n-3m-1} - 7n + 12m + 7 + R(0) \\
  &= 2n + 3m - 2 + 9\frac{m^2}{n-3m-1} + R(0).\\
\end{aligned}
\end{equation}

By~\eqref{eq:es-for-R}, we obtain
\begin{equation}\label{eq:saturation-number-bound-2}
    \sat((m+1)K_3, n) \le e(H(0)) \le \frac{17}{8}n + \frac{9m^2}{2(n-3m-1)} - \frac{15}{8}m - \frac{17}{8}.
\end{equation}

We now prove Theorem~\ref{thm:saturation-bound}.
\begin{proof}[proof to Theorem~\ref{thm:saturation-bound}]
We distinguish two cases.

First suppose that $k\ge \sqrt n$. 
By~\eqref{eq:e-H0-formula}, $2e(H(0))=2n+3m-2+\frac{9m^2}{n-3m-1}+R(0).$
Since $m\le n/3$ and $k=n-3m\ge \sqrt n$, we have
$\frac{m^2}{n-3m-1}
=
\frac{m^2}{k-1}
=
O(n^{3/2}).$
Moreover $R(0)\le 9(k-1)/4=O(n)$. Hence
\[
e(H(0))=O(n^{3/2}).
\]

Now suppose that \(k<\sqrt n\).  For \(n\ge 16\), we have
$m=\frac{n-k}{3}>\frac{n-\sqrt n}{3}\ge \sqrt n.$
Hence \(z:=\lfloor\sqrt n\rfloor\) satisfies \(0\le z\le m\).  The finitely
many admissible pairs with \(n<16\) may be absorbed into the implicit
constant.
Using the formula
$2e(H(z))
=
-4z^2+4nz+\frac{(n-3+2k)^2}{z+k-1}-3n-4k+7+R(z),$
together with
$0\le R(z)\le \frac{9(k+z-1)}4,$
we obtain
\[
2e(H(z))
\le
4nz+\frac{(n-3+2k)^2}{z+k-1}+O(n).
\]
Since $z=O(\sqrt n)$, $k<\sqrt n$, and $z+k-1=\Omega(\sqrt n)$, the right-hand
side is $O(n^{3/2})$. Thus
\[
e(H(z))=O(n^{3/2}).
\]

In both cases, the construction gives an $(m+1)K_3$-saturated graph with
$O(n^{3/2})$ edges. Therefore
$\sat((m+1)K_3,n)= O(n^{3/2}),$
as desired.
\end{proof}
In contrast to~\eqref{eq:saturation-number-bound-2}, the upper bound given in
Theorem~\ref{thm:saturation-bound} is independent of \(m\) and depends only on
\(n\).

\section{Saturation Number for Disjoint Triangles for \(n \ge 9m + 5\)}

We first prove two useful lemmas.

\begin{lemma}
\label{lem:degree-one-implies-universal}
Let $G$ be an $(m+1)K_3$-saturated graph. If
$\delta(G)=1,$
then $G$ contains a universal vertex.
\end{lemma}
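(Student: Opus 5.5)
Let \(x\) be a vertex with \(d(x)=1\), and let \(y\) be its unique neighbour. The plan is to show that \(y\) is universal. Suppose not, so some vertex \(w\ne y\), \(w\ne x\), satisfies \(wy\notin E(G)\). Note that \(G\) is not complete and \(n\ge 3m+3\), and that \(\nu(G)\le m\).

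The key observation is that \(x\) lies in no triangle of \(G\), since it has degree one. Consider any non-edge \(xv\) with \(v\ne y\); such a non-edge exists because \(n\ge 3m+3\ge 6\). By saturation, \(G+xv\) contains \(m+1\) disjoint triangles, and one of them must use the new edge \(xv\). Since \(x\) has only the neighbours \(y\) and \(v\) in \(G+xv\), that triangle must be \(\{x,y,v\}\), which forces \(yv\in E(G)\). Applying this to every \(v\notin\{x,y\}\) shows that \(yv\in E(G)\) for all such \(v\). Together with \(xy\in E(G)\), this gives \(d(y)=n-1\), contradicting the choice of \(w\). Hence \(y\) is universal.

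The only delicate point is to check that the edge \(xv\) is genuinely a non-edge for every \(v\ne y\). This holds because \(d(x)=1\) and \(N(x)=\{y\}\). Saturation then applies to each such pair, and no case analysis is needed. The main obstacle is therefore only to argue carefully that the new triangle must contain \(x\) and hence \(y\), since \(x\) has no other neighbours. Indeed, if no triangle among the \(m+1\) used the edge \(xv\), they would already be disjoint triangles in \(G\), contradicting \(\nu(G)\le m\).
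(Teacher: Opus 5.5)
Your proof is correct and is essentially the paper's argument: for each $v\notin\{x,y\}$, the triangle through the added edge $xv$ must be $\{x,y,v\}$, which forces $yv\in E(G)$. The contradiction framing via $w$ and the appeal to $n\ge 3m+3$ are unnecessary, since the direct argument already shows $y$ is universal, vacuously so if $V(G)=\{x,y\}$.
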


\begin{proof}
Let $u$ be a vertex of degree $1$, and write
$N_G(u)=\{z\}.$
Take any vertex
$x\in V(G)\setminus\{u,z\}.$
Since $z$ is the unique neighbour of $u$, we have $xu\notin E(G).$
By the $(m+1)K_3$-saturation of $G$, the graph $G+xu$ contains an
$(m+1)K_3$-packing. Since $G$ itself contains no such packing, this packing
must use the new edge $xu$.
Thus the new edge $xu$ lies in a triangle, say $xuw$,
with $uw \in E(G)$.
But $u$ has only one neighbour $z$ in $G$. Hence
$w=z.$
Therefore $xz\in E(G).$

Since $x\in V(G)\setminus\{u,z\}$ was arbitrary, the vertex $z$ is adjacent
to every vertex other than itself. Hence $z$ is a universal vertex.
\end{proof}

\begin{lemma}\label{lem:standard-upper-bound}
Let \(m\ge 1\) and \(n\ge \max \{4m+1, 3m+3\}\). If \(G\in \Sat((m+1)K_3,n)\), then $e(G)\le n+6m-1.$
\end{lemma}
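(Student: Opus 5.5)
Since \(G\in\Sat((m+1)K_3,n)\) has exactly \(\sat((m+1)K_3,n)\) edges, it suffices to exhibit one \((m+1)K_3\)-saturated graph on \(n\) vertices with at most \(n+6m-1\) edges. The plan is to use a member of the family from Proposition~\ref{prop:typical-construction} in which all nontrivial cliques are \(K_4\)'s, namely the case \(z=0\) with every \(t_i\in\{0,1\}\):
\[
G_0:=K_1\otimes\bigl(mK_4\cup \overline K_{\,n-4m-1}\bigr).
\]
This is exactly Faudree's graph \(G(n,3,m+1)\). The hypothesis \(n\ge 4m+1\) is what makes \(n-4m-1\ge 0\), so \(G_0\) is well defined.

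\textbf{Saturation.} We apply Proposition~\ref{prop:typical-construction} with \(z=0\). The components of \(G_0-Z\) are the \(m\) copies of \(K_4=K_{3\cdot1+1}\) and the \(n-4m-1\) isolated vertices \(K_1=K_{3\cdot 0+1}\). Hence the number of components is
\[
r=m+(n-4m-1)=n-3m-1,
\]
and \(\sum t_i=m\). The proposition requires \(r\ge z+2=2\), and this follows from \(n\ge 3m+3\). Thus \(G_0\) is \((m+1)K_3\)-saturated. (This is also the case \(z=0\) of Lemma~\ref{lem:existence_Z_structure}, with the particular choice of \(t_i\) described above.)

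\textbf{Edge count.} We count edges directly:
\begin{itemize}
\item the universal vertex contributes \(n-1\) edges;
\item each \(K_4\) contributes \(6\) internal edges.
\end{itemize}
Therefore
\[
e(G_0)=n-1+6m,
\]
which agrees with the formula \(m\binom{4}{2}+\binom{1}{2}+(n-1)\) in Theorem~\ref{thm:Faudree} at \(p=3\). Since \(G\) is a minimum \((m+1)K_3\)-saturated graph,
\[
e(G)=\sat((m+1)K_3,n)\le e(G_0)=n+6m-1.
\]

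There is no real obstacle here. The only point needing care is to check that the hypotheses of Proposition~\ref{prop:typical-construction} hold: the number of isolated vertices \(n-4m-1\) must be nonnegative, and there must be at least two components, which uses \(n\ge 3m+3\) (relevant when \(m\) is small).
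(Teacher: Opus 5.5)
Your proposal is correct and follows the paper's proof: you exhibit $H(0)=K_1\otimes(mK_4\cup\overline K_{n-4m-1})$, invoke Proposition~\ref{prop:typical-construction} with $z=0$, and count $n-1+6m$ edges. You also check explicitly that the number of components is $r=n-3m-1\ge 2$, which is where $n\ge 3m+3$ is needed (for instance when $m=1$); the paper leaves this check implicit.
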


\begin{proof}
Consider
\[
    H(0)=K_1\otimes\left(mK_4\cup \overline K_{n-4m-1}\right).
\]
Since \(n\ge 4m+1\), this graph is well-defined. By
Proposition~\ref{prop:typical-construction}, \(H(0)\) is
\((m+1)K_3\)-saturated. Moreover,
$e(H(0))=(n-1)+m\binom{4}{2}=n+6m-1.$
Therefore
$\sat((m+1)K_3,n)\le e(H(0))=n+6m-1.$
\end{proof}

\subsection{Minimum-Degree Restrictions for Extremal Graphs}

Now we determine the minimum degree of an $(m+1)K_3$-saturated graph for a certain range.

\begin{proposition}
\label{prop:large-n-delta-at-most-two}
Let \(m\ge 1\) and \(n\ge 6m+5\). If
$G\in \Sat((m+1)K_3,n),$
then
$\delta(G)\le 2.$
Moreover, \(G\) has no isolated vertex, and hence
$1\le \delta(G)\le 2.$
\end{proposition}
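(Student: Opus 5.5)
Two separate facts have to be proved: a degree-sum bound, which gives $\delta(G)\le 2$, and a saturation argument, which rules out isolated vertices.

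\emph{Upper bound on $\delta(G)$.} Suppose $\delta(G)\ge 3$. Then $2e(G)\ge 3n$, so $e(G)\ge 3n/2$. Since $n\ge 6m+5\ge\max\{4m+1,3m+3\}$, Lemma~\ref{lem:standard-upper-bound} applies to $G\in\Sat((m+1)K_3,n)$ and gives $e(G)\le n+6m-1$. Hence $3n/2\le n+6m-1$, that is, $n\le 12m-2$. This contradicts the hypothesis only when $n\ge 12m-1$, so it is too weak for the range $6m+5\le n<12m-1$. The crude bound must therefore be sharpened.

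\emph{Sharpening the degree bound.} I would show that an $(m+1)K_3$-saturated graph with $\delta(G)\ge 3$ has far more edges than $3n/2$. I intend to use the structure forced by $\nu(G)\le m$. Fix a maximum triangle packing $\mathcal P$ with $|\mathcal P|=\nu(G)\le m$, covering at most $3m$ vertices, and let $U$ be the uncovered set, so $|U|\ge n-3m\ge 3m+5$. The set $U$ spans no triangle, and by maximality each uncovered vertex interacts with the covered part in a restricted way.

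Now take a non-edge $xy$ with $x,y\in U$. Such a non-edge exists: otherwise $U$ is a clique of size at least $3$ and contains a triangle. By saturation, adding $xy$ creates $m+1$ disjoint triangles, one of them of the form $xyw$. So every non-adjacent pair in $U$ has a common neighbour. Combined with $\delta\ge 3$, I would count edges from $U$ to $V(\mathcal P)$ and edges inside $U$. Vertices of $U$ whose neighbourhood lies mostly in $U$ are constrained by triangle-freeness of $G[U]$. Together with the common-neighbour condition, this should force $G[U]$ to have small diameter and many edges, or else many edges into the at most $3m$ covered vertices. The target is $e(G)\ge n+6m$, which contradicts Lemma~\ref{lem:standard-upper-bound}. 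Concretely, I would aim to show that each vertex of $U$ sends roughly at least $2$ edges beyond a spanning-tree-like budget, giving about $2|U|\ge 2(n-3m)$ edges, plus the triangles' own $3|\mathcal P|$ edges. Then $2(n-3m)\ge n+6m$ holds once $n\ge 12m$, which still falls short. So the real argument likely needs the finer fact that $\delta\ge 3$ forces $e\ge \tfrac32 n$ plus a contribution from the covered vertices. The threshold $6m+5$ suggests comparing $\tfrac32 n$ against $n+6m-1$ using $|U|\ge n-3m$ and counting degrees only over $U$: $\sum_{u\in U} d(u)\ge 3|U|$, with edges inside $U$ counted twice. This step, obtaining a lower bound on $e(G)$ sharp enough to work from $n\ge 6m+5$, is the main obstacle. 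I would first try to prove that every vertex of $U$ has at least $3$ neighbours outside $U$, or alternatively that the edges of $G[U]$ can be charged so that $e(G)\ge 3|U|-O(1)$, giving $3(n-3m)\ge n+6m$, i.e.\ $n\ge \tfrac{15m}{2}$, which is consistent with the stated range up to constants.

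\emph{No isolated vertex.} Suppose $v$ is isolated. Since $G$ is not complete and $\nu(G)\le m$, I would show that $v$ can be deleted and its edge budget reused. For any $x\ne v$, the non-edge $vx$ must lie in a triangle $vxw$ of $G+vx$. That triangle also needs the edge $vw$, which is absent, so no copy of $(m+1)K_3$ can use $vx$. Hence adding $vx$ creates no $(m+1)K_3$, contradicting saturation, provided some vertex $x\neq v$ exists, which it does since $n\ge 3$. Therefore $\delta(G)\ge 1$, and combined with the first part, $1\le\delta(G)\le 2$.
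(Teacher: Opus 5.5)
Your argument that $G$ has no isolated vertex is correct and matches the paper's; the remark about deleting $v$ and reusing its edge budget is unnecessary. The part $\delta(G)\le 2$, however, is not proved. You correctly observe that $\delta(G)\ge 3$ alone gives only $e(G)\ge 3n/2$, which contradicts Lemma~\ref{lem:standard-upper-bound} only for $n\ge 12m-1$. The proposed sharpening via a maximum packing $\mathcal P$ and its uncovered set $U$ is never carried out. Neither of the claims you would need is justified: that every vertex of $U$ has three neighbours outside $U$, or that $e(G)\ge 3|U|-O(1)$. Even granting the second, it gives a contradiction only when $2n\ge 15m$. That leaves $6m+5\le n<15m/2$ uncovered for every $m\ge 4$. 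So the main step is missing, not merely unpolished, and the packing/uncovered-set framework does not lead to it.

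The missing idea is to apply saturation locally at a vertex $v$ of minimum degree $d\ge 3$, not globally through a packing. For each $u\notin N[v]$, the added edge $uv$ must lie in a triangle of $G+uv$, so $u$ has a neighbour in $N(v)$. Hence $\sum_{w\in N(v)}d(w)\ge (n-d-1)+d=n-1$. The $n-d-1$ vertices outside $N[v]$ each have degree at least $d$, so $2e(G)\ge d+(n-1)+d(n-d-1)=d(n-d)+n-1$. For $3\le d\le n-3$, the identity $d(n-d)-3(n-3)=(d-3)(n-d-3)\ge 0$ gives $2e(G)\ge 4n-10$. The case $d=n-2$ gives $2e(G)\ge n(n-2)>4n-10$ directly, and $d=n-1$ is impossible because $G$ is not complete. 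Thus $e(G)\ge 2n-5$. The inequality $2n-5\ge n+6m$ is exactly the hypothesis $n\ge 6m+5$, which contradicts Lemma~\ref{lem:standard-upper-bound}. This is where the threshold comes from. The extra $n-1$ contributed by the neighbourhood of a minimum-degree vertex is precisely what your crude degree count lacks.
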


\begin{proof}

First, \(G\) has no isolated vertex. Indeed, if \(v\) were isolated, then
for any \(u\neq v\), the added edge \(uv\) would not lie in any triangle
of \(G+uv\), contradicting \((m+1)K_3\)-saturation. Hence
$\delta(G)\ge 1.$

Suppose for contradiction that
$\delta(G)\ge 3.$
Let \(v\) be a vertex of minimum degree, and write
$d:=d_G(v)=\delta(G)\ge 3.$
For every
$u\in V(G)\setminus N_G[v],$
the edge \(uv\) is missing. By saturation, \(G+uv\) contains an
\((m+1)K_3\)-packing, and this packing must use the new edge \(uv\).
Therefore \(uv\) lies in a triangle of \(G+uv\). Hence \(u\) has at least
one neighbour in \(N_G(v)\).
It follows that there are at least
$n-d-1$
edges between \(N_G(v)\) and \(V(G)\setminus N_G[v]\). Since \(v\) sends
\(d\) edges to \(N_G(v)\), we have
\[
\sum_{w\in N_G(v)} d_G(w)\ge (n-d-1)+d=n-1.
\]
Moreover, every vertex in \(V(G)\setminus N_G[v]\) has degree at least \(d\).
Thus
\begin{equation}\label{eq:degree-count-delta-ge-3}
\begin{aligned}
2e(G)
&=
d_G(v)+\sum_{w\in N_G(v)}d_G(w)
+\sum_{u\in V(G)\setminus N_G[v]}d_G(u) \\
&\ge
d+(n-1)+d(n-d-1) \\
&=
d(n-d)+n-1.
\end{aligned}
\end{equation}

We next show that, in all possible cases for \(d\), the assumption
\(d=\delta(G)\ge 3\) forces
$e(G)\ge 2n-5.$

(i) If \(d\le n-3\), since \(d\ge 3\),
\[
d(n-d)-3(n-3)=(d-3)(n-d-3)\ge 0.
\]
Therefore \(d(n-d)\ge 3(n-3)\). By~\eqref{eq:degree-count-delta-ge-3}, 
we obtain \(2e(G)\ge 3(n-3)+n-1=4n-10\), and hence \(e(G)\ge 2n-5\).

(ii) If \(d=n-2\), then
$2e(G)=\sum_{u\in V(G)}d_G(u)\ge n\delta(G)=n(n-2).$
Moreover,
$n(n-2)-(4n-10)=(n-3)^2+1>0.$
Hence
\[
2e(G)\ge n(n-2)>4n-10,
\]
and so again
\[
e(G)>2n-5.
\]

(iii) The case \(d=n-1\) is impossible, otherwise $G$ is complete.

Thus in all cases with \(\delta(G)\ge 3\), we obtain
$e(G)\ge 2n-5.$
Since \(n\ge 6m+5\),
$e(G) \ge 2n-5\ge n+6m>n+6m-1,$
contradicting lemma~\ref{lem:standard-upper-bound}.

Therefore
$\delta(G)\le 2.$
Together with \(\delta(G)\ge 1\), this gives
$1\le \delta(G)\le 2.$
\end{proof}

\begin{proposition}
\label{prop:large-n-excludes-delta-two}
Let \(m\ge 1\) and let
$n\ge 9m+5.$
If
$G\in \Sat((m+1)K_3,n),$
then
$\delta(G)\neq 2.$
\end{proposition}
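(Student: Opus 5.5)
We argue by contradiction. Suppose \(G\in\Sat((m+1)K_3,n)\) with \(n\ge 9m+5\) and \(\delta(G)=2\). The target is the lower bound \(e(G)\ge n+6m\) (or an outright contradiction), which is incompatible with Lemma~\ref{lem:standard-upper-bound}. Since \(9m+5\ge\max\{4m+1,3m+3\}\), that lemma gives \(e(G)\le n+6m-1\).

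Fix a vertex \(v\) with \(d(v)=2\) and \(N(v)=\{x,y\}\). As in the proof of Lemma~\ref{lem:degree-one-implies-universal}, every non-neighbour \(u\) of \(v\) must complete a triangle with the new edge \(uv\). Hence every \(u\in V(G)\setminus\{v,x,y\}\) is adjacent to \(x\) or to \(y\), so \(d(x)+d(y)\ge n-1\).

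Next we count the remaining degree sum. Let \(W:=V(G)\setminus\{v,x,y\}\), with \(|W|=n-3\). Split \(W\) into \(W_{\le 2}\), the vertices of degree exactly \(2\), and \(W_{\ge3}\). Write \(\ell\) for the number of degree-two vertices, counting \(v\). Then
\[
2e(G)\ge 2+(n-1)+2\ell+3(n-2-\ell)=4n-5-\ell,
\]
up to small corrections depending on whether \(x,y\) themselves have degree \(2\). If \(\ell\) is small this already exceeds \(2(n+6m-1)\) once \(n\ge 9m+5\).

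So the main work is to show that degree-two vertices are few, or that they force extra edges elsewhere. The key structural step concerns a vertex \(w\) of degree \(2\) with \(N(w)=\{a,b\}\). The same argument shows that every other vertex is adjacent to \(a\) or \(b\). If two degree-two vertices had disjoint neighbourhood pairs, we would get \(n-1\) edges at each pair, hence about \(2n\) edges, a contradiction. Therefore any two neighbourhood pairs intersect. I would then show that either all degree-two vertices share a common neighbour \(x\), or the pairs form a triangle configuration \(\{x,y\},\{y,z\},\{x,z\}\). In the second case every vertex must be adjacent to two of \(x,y,z\), which gives about \(2n\) edges and a contradiction.

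In the common-neighbour case, write the pairs as \(\{x,y_i\}\). Each non-neighbour \(u\) of \(x\) must be adjacent to every \(y_i\). If there are at least two distinct \(y_i\), then \(x\) misses few vertices, or else we obtain many edges. So \(x\) is nearly universal, and essentially we are in the universal-vertex regime.

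The step I expect to be hardest is using saturation itself, rather than degree counting alone. Adding the edge \(vu\) must create \(m+1\) disjoint triangles, one of which is \(vux\) or \(vuy\). This means \(G-\{v,u,x\}\) (or \(G-\{v,u,y\}\)) contains \(m\) disjoint triangles for every \(u\). I would exploit this with \(u\) chosen among the low-degree vertices: it forces the edges outside \(N[x]\cup\{y\}\) to support \(m\) disjoint triangles while avoiding \(x\). Combined with the degree count, this should yield the extra roughly \(6m\) edges in excess of \(n\). The point is that \(m\) disjoint triangles not using \(x\), together with robustness to deleting any single further vertex, plausibly require about \(6m\) more edges than the star at \(x\). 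For instance, each triangle must either be replaceable or sit in a \(K_4\)-like structure.

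Making this "robust \(m\) triangles cost about \(6m\) edges" claim precise is the crux. I would first try to show that the triangles lie in components of \(G-x\) having at least \(3t+1\) vertices and at least \(6t\) edges, mirroring the extremal graph \(H(0)\). The condition \(n\ge 9m+5\) would enter when bounding how many low-degree vertices can exist outside these components.
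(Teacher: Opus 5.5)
\textbf{There is a genuine gap: the proposal never proves the lower bound $e(G)\ge n+6m$.} You correctly identify the step that carries the weight: $m$ disjoint triangles that survive deleting any further vertex should ``cost about $6m$ edges'', for instance by lying in components of $G-x$ with at least $3t+1$ vertices and $6t$ edges. But you only assert this as plausible and do not prove it. That statement is essentially the content of the degree-one lemma (Lemma~\ref{lemma:triangle-case-sharper-threshold}). That argument depends on a universal vertex $z$; for example, Claim~\ref{claim:lower-extra-neighbour} reaches its contradiction through the triangle $\{z,x,a\}$. When $\delta(G)=2$ there is no universal vertex, and your reduction to ``$x$ nearly universal'' is itself only sketched.

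That reduction also has an unhandled case. If every degree-two vertex has the same neighbourhood $\{x,y\}$, nothing makes $x$ nearly universal. Your degree sum cannot absorb this case, because it uses only $d(x)+d(y)\ge n-1$ and drops the extra $+r$ contributed by vertices adjacent to both $x$ and $y$. Together with the heuristic steps (``up to small corrections'', ``about $2n$ edges'', ``$x$ misses few vertices''), this means the proposal is a plan rather than a proof.

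\textbf{The missing idea: $6m$ edges from the triangles is far more than you need.} Let $W=V(G)\setminus\{v,x,y\}$ and $\varepsilon=1$ if $xy\in E(G)$, else $0$. Apply saturation to $vw$ for $w\in W$ when $\varepsilon=1$, or to $xy$ when $\varepsilon=0$. Either way $G[W]$ contains $m-\varepsilon$ disjoint triangles, and these contribute only $3(m-\varepsilon)$ edges. The rest of the bound comes from the minimum degree.

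Let $r$ be the number of $w\in W$ adjacent to both $x$ and $y$. Then $e(G)=n-1+r+\varepsilon+e_G(W)$. Each of the $n-3-r$ vertices adjacent to exactly one of $x,y$ and not covered by the packing needs another neighbour inside $W$. This gives at least $\lceil\max\{n-3-r-3(m-\varepsilon),0\}/2\rceil$ further edges of $G[W]$, disjoint from the triangle edges.

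Taking the worst case over $r$ yields $e(G)\ge n+3m-1-2\varepsilon+\lceil (n-3m-3+3\varepsilon)/2\rceil$. This is at least $n+6m$ precisely because $n\ge 9m+5$, so this is where the threshold enters, not in bounding the number of low-degree vertices. The bound contradicts Lemma~\ref{lem:standard-upper-bound}. No classification of the degree-two vertices and no robustness argument is required.
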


\begin{proof}

By proposition~\ref{prop:large-n-delta-at-most-two}, $1 \le \delta(G) \le 2$.
 Suppose for contradiction that
$\delta(G)=2.$
Let \(v\in V(G)\) be a vertex of degree \(2\), and write
$N_G(v)=\{x,y\}.$
Set
$W:=V(G)\setminus\{v,x,y\}.$

For every \(w\in W\), the edge \(vw\) is missing. By the
\((m+1)K_3\)-saturation of \(G\), the graph \(G+vw\) contains an
\((m+1)K_3\)-packing. Since \(G\) itself contains no such packing, this
packing must use the new edge \(vw\). Hence \(vw\) lies in a triangle of
\(G+vw\). As the only neighbours of \(v\) in \(G\) are \(x\) and \(y\), it
follows that every vertex of \(W\) is adjacent to at least one of \(x\) and
\(y\).

Let
$r:=|\{w\in W: wx\in E(G)\text{ and }wy\in E(G)\}|,$
and
\[
\varepsilon=
\begin{cases}
1,& \text{if }xy\in E(G),\\
0,& \text{if }xy\notin E(G).
\end{cases}
\]
Then
$e_G(W,\{x,y\})=n-3+r.$
Therefore
\[
e(G)
=
2+\varepsilon+e_G(W,\{x,y\})+e_G(W)
=
n-1+r+\varepsilon+e_G(W).
\]

Now define
$D:=\{w\in W: w\text{ is adjacent to exactly one of }x,y\}.$
Then
$|D|=n-3-r.$
Every vertex \(w\in D\) is adjacent to exactly one of \(x\) and \(y\), and
is not adjacent to \(v\). Since \(\delta(G)=2\), each such vertex has at
least one neighbour in \(W\).

We claim that \(G[W]\) contains at least \(m-\varepsilon\) pairwise
vertex-disjoint triangles.

Indeed, if \(\varepsilon=1\), then \(xy\in E(G)\). Choose any \(w\in W\).
The edge \(vw\) is missing, so \(G+vw\) contains an \((m+1)K_3\)-packing
using the edge \(vw\). The triangle containing \(vw\) has the form
$vws$
for some \(s\in\{x,y\}\). After deleting this triangle, the remaining
\(m\) triangles are vertex-disjoint and lie in
$G-\{v,w,s\}.$
Among these \(m\) triangles, at most one can use the remaining vertex of
\(\{x,y\}\setminus\{s\}\). Hence at least \(m-1=m-\varepsilon\) of them
are contained in \(G[W]\).

If \(\varepsilon=0\), then \(xy\notin E(G)\). By saturation, \(G+xy\)
contains an \((m+1)K_3\)-packing using the new edge \(xy\). The triangle
containing \(xy\) has the form
$xyt$
for some \(t\in \{v\}\cup W\). Removing this triangle leaves \(m\)
pairwise vertex-disjoint triangles in \(G-\{x,y,t\}\). If \(t=v\), then
these \(m\) triangles are all contained in \(G[W]\). If \(t\in W\), then
the vertex \(v\) is isolated in \(G-\{x,y,t\}\), since
\(N_G(v)=\{x,y\}\). Thus none of the remaining \(m\) triangles uses \(v\),
and again all of them are contained in \(G[W]\). Therefore \(G[W]\)
contains at least \(m=m-\varepsilon\) pairwise vertex-disjoint triangles.

Fix such an \((m-\varepsilon)K_3\)-packing in \(G[W]\), and let
\(S\) be the set of vertices covered by this packing. 
Every vertex in \(D\setminus S\) has at least one neighbour in \(W\). The
edges witnessing this are not among the \(3(m - \varepsilon)\) triangle edges, and each
such additional edge can be incident with at most two vertices of
\(D\setminus S\). Hence

\[
e_G(W)
\ge
3(m - \varepsilon)+
\left\lceil
\frac{\max\{|D|-3(m - \varepsilon),0\}}{2}
\right\rceil = 3(m-\varepsilon)
+
\left\lceil
\frac{\max\{n-3-r-3(m-\varepsilon),0\}}{2}
\right\rceil.
\]

We obtain
\begin{equation}\label{eq:lower-bound-prop2}
\begin{aligned}
e(G)
&\ge
n-1+r+\varepsilon
+
3(m-\varepsilon)
+
\left\lceil
\frac{\max\{n-3-r-3(m-\varepsilon),0\}}{2}
\right\rceil \\
&=
n+3m-1+r-2\varepsilon
+
\left\lceil
\frac{\max\{n-r-3m-3+3\varepsilon,0\}}{2}
\right\rceil.
\end{aligned}
\end{equation}

We shall use the following elementary inequality: for all integers
\(M\ge 0\) and \(r\ge 0\),
\begin{equation}\label{eq:inequality-pro2}
    r+\left\lceil\frac{\max\{M-r,0\}}{2}\right\rceil
\ge
\left\lceil\frac{M}{2}\right\rceil.
\end{equation}
Apply this with
$M =n-3m-3+3\varepsilon$.
Since \(n\ge 9m+5\), we have \(M\ge 0\).
Thus~\eqref{eq:lower-bound-prop2} and~\eqref{eq:inequality-pro2} imply
\begin{equation}\label{eq:lower-boound-prop2-2}
e(G) \ge n+3m-1-2\varepsilon +
\left\lceil \frac{n-3m-3+3\varepsilon}{2} \right\rceil.
\end{equation}

Finally, since \(n\ge 9m+5\) and \(\varepsilon\in\{0,1\}\), 
which implies that 
$\left\lceil \frac{2+3\varepsilon}{2} \right\rceil = 1+2\varepsilon$, 
we have
\[
\left\lceil
\frac{n-3m-3+3\varepsilon}{2}
\right\rceil
\ge
\left\lceil
\frac{6m+2+3\varepsilon}{2}
\right\rceil
=
3m+1+2\varepsilon.
\]
Substituting this into \eqref{eq:lower-boound-prop2-2}, we get
$e(G)\ge n+3m-1-2\varepsilon+3m+1+2\varepsilon = n+6m.$
This contradicts Lemma~\ref{lem:standard-upper-bound}. Therefore
\(\delta(G)\neq 2\). Together with
Proposition~\ref{prop:large-n-delta-at-most-two}, we obtain
\(\delta(G)=1\).
\end{proof}

\subsection{A sharper threshold for \texorpdfstring{$(m+1)K_3$}{(m+1)K3}-saturation}

In this section, we mainly follow the argument of Faudree et al.~\cite{FAUDREE20095870} and prove the following theorem in our framework.

\begin{lemma}\label{lemma:triangle-case-sharper-threshold}
Let \(m\ge 1\), \(n\ge 3m + 3\) and let $G$ be a $(m+1)K_3$-saturated graph. 
If there is a vertex of degree $1$,
then $e(G) \ge n+6m-1.$
\end{lemma}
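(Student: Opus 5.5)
Suppose $G$ has a vertex $u$ of degree $1$. By Lemma~\ref{lem:degree-one-implies-universal}, $G$ has a universal vertex $z$. The plan is induction on $m$ after deleting this universal vertex. This mirrors Mader's structure theorem (Theorem~\ref{thm:Mader}) and the argument of Faudree et al.

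\emph{Step 1: $G' := G - z$ is $mK_3$-saturated on $n-1$ vertices.}
First, $G'$ contains no $mK_3$. Otherwise, since $n-1 \ge 3m+2$, some edge can be added at $u$ to produce an $(m+1)$-st disjoint triangle through $z$, contradicting that $G$ is $(m+1)K_3$-free. The precise way to combine this with $u$ needs care.

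Second, let $xy$ be a non-edge of $G'$. Then $G + xy$ contains $m+1$ disjoint triangles.
\begin{itemize}
\item If $z$ lies in one of them, deleting that triangle leaves $m$ disjoint triangles in $G' + xy$ avoiding it.
\item If $z$ is unused, we get $m+1$ triangles in $G' + xy$, hence at least $m$.
\end{itemize}
In both cases $G' + xy \supseteq mK_3$.

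Note that $G'$ may be $(mK_3)$-saturated without having a degree-$1$ vertex ($u$ becomes isolated in $G'$). So the induction hypothesis cannot be the statement itself. I would therefore prove a stronger general bound: every $mK_3$-saturated graph $F$ on $N \ge 3m$ vertices with an isolated vertex satisfies $e(F) \ge N + 6m - 6$, or something tailored. This seems messy.

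\emph{Alternative route.} Instead, handle $G'$ directly. Let $I$ be the set of isolated vertices of $G'$, so $u \in I$. For $x, y \in I$, the non-edge $xy$ must lie in a triangle of $G + xy$, which can only be $xyz$. Hence the remaining $m$ triangles lie in $G' - I$, so $\nu(G' - I) \ge m$. Moreover, for $x \in I$ and $y \notin I$, the triangle through $xy$ is also $xyz$, so $G' - y - I$ contains $m$ disjoint triangles.

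So $H := G' - I$ has $\nu(H) \ge m$ and $\nu(H - y) \ge m$ for every vertex $y$ of $H$. We also have $\nu(G) \le m$: $z$ plus an edge of $H$ disjoint from $m$ triangles would give $m+1$ triangles. Hence $H$ is "triangle-robust": deleting any vertex keeps $m$ disjoint triangles, and every edge of $H$ meets every $m$-packing of $H$.

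\emph{Key step: show $e(H) \ge 6m$.} Then
\[
e(G) = (n-1) + e(H) \ge n + 6m - 1.
\]
I would prove $e(H) \ge 6m$ by induction on $m$ via these properties. Consider the components of $H$; $\nu$ is additive over components, so reduce to components with $\nu = t$. Each such component must satisfy both properties, and I would show it has at least $6t$ edges, with $K_4$ the extremal case for $t = 1$.

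For $t = 1$: the component has a triangle, and every vertex deletion still leaves a triangle. So it contains two triangles with no common vertex, or more precisely it has no vertex covering all triangles. Combined with no two disjoint triangles plus an extra disjoint edge, this should give at least $6$ edges, e.g.\ $K_4$ or a 5-wheel-like structure.

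The general-$t$ case is the main obstacle. I would first try a counting argument: fix an $m$-packing $\mathcal P$. For each vertex $y$ of a packing triangle, a packing of $H - y$ forces extra edges near $y$. Charge at least three non-packing edges to each triangle of $\mathcal P$, using that the packing in $H - y$ must reroute.
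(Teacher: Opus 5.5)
You correctly reduce the problem to a bound on $e(H)$, but you never prove that bound, and that bound is the whole content of the lemma.

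\textbf{What is right.} Your alternative route sets up the problem exactly as the paper does. With $H=G-z-I$ you have $e(G)=(n-1)+e(H)$. You also have $\nu(H-y)\ge m$ for every $y$, which is the paper's observation that $G[W\setminus\{w\}]$ contains $mK_3$ for every $w$. Finally, no $m$-packing of $H$ leaves an edge of $H$ uncovered.

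\textbf{Step 1 is false; drop it.} Your own observation $\nu(H)\ge m$ shows that $G-z$ \emph{always} contains $mK_3$; for $H(0)$, $G-z=mK_4\cup\overline{K}_{n-4m-1}$. The reasoning you give for Step 1 does not yield a contradiction: producing $m+1$ triangles by adding an edge is what saturation requires, not something it forbids.

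\textbf{The unproved step.} The inequality $e(H)\ge 6m$ is never established. Splitting $H$ into components merely restates the problem. The $t=1$ case is only sketched; your ``5-wheel-like'' alternative in fact violates your first property, since deleting the hub destroys every triangle. For general $t$ you offer only the hope that three non-packing edges can be charged to each triangle.

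\textbf{The missing mechanism.} You need a way to produce one \emph{private} extra edge per packed vertex. Fix an $m$-packing $T_1,\dots,T_m$ of $H$ with uncovered set $R$, and let $F$ be the non-packing edges inside $T=\bigcup_i V(T_i)$. You need $e(F)+e_G(T,R)\ge 3m$.
\begin{itemize}
\item Your two properties do show that every packed vertex has a neighbour outside its own triangle. If $a\in V(T_i)$ had none, take $x\in V(T_i)\setminus\{a\}$; an $m$-packing of $H-x$ would miss both ends of the edge $ax$.
\item This only gives $2e(F)+e_G(T,R)\ge 3m$, because an $F$-edge serves two packed vertices.
\item The paper closes this factor of two by showing that every acyclic component of $F$ sends an edge to $R$. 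Components containing a cycle already have at least as many edges as vertices.
\end{itemize}

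\textbf{The hard subcase.} A leaf $u_1\in T_1$ has its $F$-neighbour $w\in T_2=\{w,s,t\}$ adjacent to all of $T_1$, and a second leaf $u_2$ hangs at $w$. The paper gets its contradiction by adding the non-edge $u_1s$, which lies inside $W$. It then reroutes triangles, using $\{u_1,u_2,u_3\}$ and $\{z,w,s\}$, into an $(m+1)$-packing of $G$.

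This uses saturation at a non-edge not incident with $I$, which your two properties do not record. So you must do one of two things:
\begin{itemize}
\item find a genuinely new argument that derives $e(H)\ge 6m$ from those two properties alone (this is not evident); or
\item bring back the full saturation hypothesis and carry out a leaf/component analysis of this kind.
\end{itemize}
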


\begin{proof}
Let \(v\in V(G)\) be a vertex of degree \(1\). By
Lemma~\ref{lem:degree-one-implies-universal}, the unique neighbour of \(v\)
is a universal vertex. Denote this universal vertex by \(z\). Thus
$N_G(v)=\{z\}.$
Set
$W:=V(G)\setminus\{z,v\}.$

For every \(w\in W\), the edge \(vw\) is missing. Since \(G\) is
\((m+1)K_3\)-saturated, \(G+vw\) contains an \((m+1)K_3\)-packing. In such
a packing, the vertex \(v\) must lie in the triangle
$\{v,z,w\},$
since in \(G+vw\) the only neighbours of \(v\) are \(z\) and \(w\).
Therefore, after deleting this triangle, the graph
$G[W\setminus\{w\}]$
contains an \(mK_3\)-packing. In particular, \(G[W]\) contains \(m\)
pairwise vertex-disjoint triangles.

Fix such an \(mK_3\)-packing in \(G[W]\), and denote its triangles by
$T_1,T_2,\ldots,T_m.$
Let
\[
    T:=V(T_1)\dot\cup V(T_2)\dot\cup\cdots\dot\cup V(T_m),
    \qquad
    R:=W\setminus T.
\]

We first record a simple consequence of saturation.

\begin{claim}\label{claim:lower-extra-neighbour}
For every \(i\in[m]\) and every vertex \(a\in V(T_i)\), the vertex \(a\)
has a neighbour outside \(V(T_i)\cup\{z\}\).
\end{claim}

\begin{proof}
Suppose not. Then for some \(i\in[m]\) and some \(a\in V(T_i)\), all
neighbours of \(a\) lie in \(V(T_i)\cup\{z\}\).

Choose a vertex \(x\in V(T_i)\setminus\{a\}\). Since \(xv\notin E(G)\),
the graph \(G+xv\) contains an \((m+1)K_3\)-packing. The vertex \(v\) must
lie in the triangle
$\{v,z,x\}.$
After deleting this triangle, the remaining graph contains an
\(mK_3\)-packing \(P\) avoiding \(v,z,x\).

We claim that \(a\notin V(P)\). Indeed, after deleting \(z\) and \(x\), the
vertex \(a\) has at most one remaining neighbour, namely the third vertex
of \(T_i\). Hence \(a\) cannot lie in any triangle of \(P\).

Therefore \(P\) is an \(mK_3\)-packing in \(G\) avoiding \(z,x,a\). But
\(\{z,x,a\}\) is a triangle in \(G\), because \(z\) is universal and
\(x,a\in V(T_i)\). Thus
$P\cup\{\{z,x,a\}\}$
is an \((m+1)K_3\)-packing in \(G\), a contradiction.
\end{proof}

Now define \(F\) to be the graph with vertex set \(T\) and edge set
$E(F):=E(G[T])\setminus \bigcup_{i=1}^m E(T_i).$
Thus \(F\) consists exactly of the edges of \(G[T]\) which are not among the
fixed triangle edges of \(T_1,\ldots,T_m\).

We shall prove that
\[
    e(F)+e_G(T,R)\ge 3m.
\]
To this end, observe that every connected component of $F$ either contains a cycle
or is a tree. In the latter case, it either has an edge to $R$ or has no
edge to $R$. We first exclude connected tree components of \(F\)
which have no edge to \(R\).

\begin{claim}\label{claim:lower-no-bad-tree-component}
No connected component \(C\) of \(F\) is a tree satisfying
$e_G(V(C),R)=0.$
\end{claim}

\begin{proof}
Suppose, to the contrary, that such a connected component \(C\) exists. By
Claim~\ref{claim:lower-extra-neighbour}, \(C\) is not an isolated vertex.
Hence \(C\) is a tree with at least two vertices.

Choose a longest path in \(C\), and let \(u_1\) be one of its endvertices.
Let \(w\) be the unique neighbour of \(u_1\) in \(C\). Suppose
$u_1\in V(T_1)$, $T_1=\{u_1,u_2,u_3\}.$
Since \(u_1w\in E(F)\), the vertex \(w\) lies in a triangle different from
\(T_1\). Write
$ T_2=\{w,s,t\}.$

We first show that
$u_2w,u_3w\in E(G).$
Suppose \(u_3w\notin E(G)\). Since \(vu_2\notin E(G)\), the graph
\(G+vu_2\) contains an \((m+1)K_3\)-packing. The vertex \(v\) must lie in
the triangle
$\{v,z,u_2\}.$
After deleting this triangle, the remaining graph contains an
\(mK_3\)-packing \(P\) in \(G[W\setminus\{u_2\}]\).

Since \(C\) has no edge to \(R\), and since \(u_1\) is a leaf of \(C\), the
only possible neighbours of \(u_1\) in \(G[W\setminus\{u_2\}]\) are
\(u_3\) and \(w\). But \(u_3w\notin E(G)\), so \(u_1\) lies in no triangle
of \(G[W\setminus\{u_2\}]\). Hence \(u_1\notin V(P)\). Consequently,
$P\cup\{\{z,u_1,u_2\}\}$
is an \((m+1)K_3\)-packing in \(G\), a contradiction. Therefore
\(u_3w\in E(G)\).

By the same argument with \(u_2\) and \(u_3\) interchanged, using the
missing edge \(vu_3\) instead of \(vu_2\), we also obtain \(u_2w\in E(G)\).
Thus
$u_2w,u_3w\in E(G).$

Now return to the longest path chosen above. Since this path starts with
the edge \(u_1w\), at most one of \(u_2,u_3\) can be the vertex following
\(w\) on this path. Relabel \(u_2,u_3\), so that \(u_2\) is
not that vertex. We claim that \(u_2\) is also a leaf of \(C\).

Indeed, if \(u_2\) had a neighbour in \(C\) other than \(w\), then in the
connected component of \(C-w\) containing \(u_2\), choose a vertex \(y\) farthest from
\(u_2\). The path from \(y\) to the other end of the original longest path
would be strictly longer than the original longest path, because the branch
through \(u_2\) has length at least \(2\) from \(w\), while the branch
through \(u_1\) has length \(1\) from \(w\). This is impossible. Hence
\(u_2\) is a leaf of \(C\), and its unique neighbour in \(F\) is \(w\).
See figure~\ref{fig:u2-leaf}.

\begin{figure}[t]
\centering

\begin{subfigure}[t]{0.48\textwidth}
\centering
\begin{tikzpicture}[
    vertex/.style={circle, draw, inner sep=1.5pt, minimum size=6pt},
    every node/.style={font=\small}
]

\node[vertex] (u1) at (0,2.25) {};
\node[vertex] (u2) at (-1.10,0.35) {};
\node[vertex] (u3) at (1.10,0.35) {};
\node[vertex] (w)  at (0,-0.55) {};
\node[vertex] (s)  at (-1.05,-2.35) {};
\node[vertex] (t)  at (1.05,-2.35) {};
\node[vertex] (root) at (2.35,-0.55) {};

\node[vertex] (z) at (3.35,2.15) {};
\node[vertex] (v) at (4.45,1.45) {};

\draw (-0.05,-0.10) ellipse [x radius=3.00cm, y radius=2.95cm];
\node at (-0.05,-3.25) {$W$};

\node[above right=1pt of u1] {$u_1$ (leaf)};
\node[left=2pt of u2] {$u_2$};
\node[right=2pt of u3] {$u_3$};
\node[right=2pt of w] {$w$};
\node[below=2pt of s] {$s$};
\node[below=2pt of t] {$t$};
\node[right=2pt of root] {root};
\node[above right=1pt of z] {$z$};
\node[below right=1pt of v] {$v$};

\draw[thick] (u1) -- (u2);
\draw[thick] (u1) -- (u3);
\draw[thick] (u2) -- (u3);

\draw[blue, very thick] (u1) -- (w);
\draw (u2) -- (w);
\draw (u3) -- (w);

\draw[thick] (w) -- (s);
\draw[thick] (w) -- (t);
\draw[thick] (s) -- (t);

\draw[blue, very thick, decorate,
      decoration={snake, amplitude=1.2pt, segment length=8pt}]
    (w) -- (root);

\coordinate (xW) at (2.25,1.65);
\draw (xW) -- (z);
\draw (z) -- (v);

\end{tikzpicture}
\end{subfigure}
\hfill
\begin{subfigure}[t]{0.48\textwidth}
\centering
\begin{tikzpicture}[
    vertex/.style={circle, draw, inner sep=1.5pt, minimum size=6pt},
    every node/.style={font=\small}
]

\node[vertex] (u1) at (0,2.25) {};
\node[vertex] (u2) at (-1.10,0.35) {};
\node[vertex] (u3) at (1.10,0.35) {};
\node[vertex] (w)  at (0,-0.55) {};
\node[vertex] (s)  at (-1.05,-2.35) {};
\node[vertex] (t)  at (1.05,-2.35) {};
\node[vertex] (root) at (2.45,-0.55) {};

\node[vertex] (z) at (3.55,2.15) {};
\node[vertex] (v) at (4.65,1.45) {};

\draw (-0.05,-0.10) ellipse [x radius=3.10cm, y radius=2.95cm];
\node at (-0.05,-3.25) {$W$};

\node[above right=1pt of u1] {$u_1$ (leaf)};
\node[left=2pt of u2] {$u_2$};
\node[right=2pt of u3] {$u_3$};
\node[below right=0pt of w] {$w$};
\node[below left=1pt of s] {$s$};
\node[below right=1pt of t] {$t$};
\node[right=2pt of root] {root};
\node[above right=1pt of z] {$z$};
\node[right=2pt of v] {$v$};

\draw[thick] (u1) -- (u2);
\draw[thick] (u1) -- (u3);
\draw[thick] (u2) -- (u3);

\draw[blue, very thick] (u1) -- (w);
\draw (u2) -- (w);
\draw[blue, very thick] (w) -- (u3);

\draw[thick] (w) -- (s);
\draw[thick] (w) -- (t);
\draw[thick] (s) -- (t);

\draw[blue, very thick, decorate,
      decoration={snake, amplitude=1.2pt, segment length=8pt}]
    (u3) -- (root);

\coordinate (xW) at (2.30,1.65);
\draw (xW) -- (z);
\draw (z) -- (v);

\end{tikzpicture}
\end{subfigure}

\caption{ \(u_2\) is also a leaf of \(C\). The blue edges indicate the chosen longest path.}
\label{fig:u2-leaf}
\end{figure}

Since \(u_1\) is a leaf of \(C\), its only neighbour in
\(T\setminus V(T_1)\) is \(w\). Also \(C\) has no edge to \(R\). Therefore
$u_1s\notin E(G).$
By saturation, \(G+u_1s\) contains an \((m+1)K_3\)-packing. Let \(Q\) be
the triangle of this packing containing the new edge \(u_1s\), and let
\(\mathcal P\) be the remaining \(m\) triangles. Then \(\mathcal P\) is an
\(mK_3\)-packing in \(G\) avoiding \(V(Q)\).

The third vertex of \(Q\) must be a common neighbour of \(u_1\) and \(s\)
in \(G\). Since
$N_G(u_1)\subseteq \{z,u_2,u_3,w\},$
and since \(u_2s\notin E(G)\) because \(u_2\) is a leaf of \(C\), the third
vertex of \(Q\) belongs to \(\{z,w,u_3\}\). Hence
\[
    Q\in
    \bigl\{
        \{u_1,s,z\},
        \{u_1,s,w\},
        \{u_1,s,u_3\}
    \bigr\}.
\]

We now consider these three possibilities.

First suppose
$Q=\{u_1,s,z\}.$
The only possible neighbours of \(u_2\) outside \(V(Q)\) are \(u_3\) and
\(w\). Hence, if \(u_2\in V(\mathcal P)\), then \(\mathcal P\) contains the
triangle
$\{u_2,u_3,w\}.$
In this case,
$\bigl(\mathcal P\setminus\{\{u_2,u_3,w\}\}\bigr)
    \cup
    \{\{u_1,u_2,u_3\},\{z,w,s\}\}$
is an \((m+1)K_3\)-packing in \(G\), a contradiction. Therefore
\(u_2\notin V(\mathcal P)\). But then
$\mathcal P\cup\{\{z,u_1,u_2\}\}$
is an \((m+1)K_3\)-packing in \(G\), again a contradiction.

Next suppose
$Q=\{u_1,s,w\}.$
The only possible neighbours of \(u_2\) outside \(V(Q)\) are \(z\) and
\(u_3\). Hence, if \(u_2\in V(\mathcal P)\), then \(\mathcal P\) contains
the triangle
$\{z,u_2,u_3\}.$
In this case,
$\bigl(\mathcal P\setminus\{\{z,u_2,u_3\}\}\bigr)
    \cup
    \{\{u_1,u_2,u_3\},\{z,w,s\}\}$
is an \((m+1)K_3\)-packing in \(G\), a contradiction. Therefore
\(u_2\notin V(\mathcal P)\). But then
$\mathcal P\cup\{\{u_1,u_2,w\}\}$
is an \((m+1)K_3\)-packing in \(G\), again a contradiction.

Finally suppose
$Q=\{u_1,s,u_3\}.$
The only possible neighbours of \(u_2\) outside \(V(Q)\) are \(z\) and
\(w\). Hence, if \(u_2\in V(\mathcal P)\), then \(\mathcal P\) contains the
triangle
$\{z,u_2,w\}.$
In this case,
$\bigl(\mathcal P\setminus\{\{z,u_2,w\}\}\bigr)
    \cup
    \{\{u_1,u_2,u_3\},\{z,w,s\}\}$
is an \((m+1)K_3\)-packing in \(G\), a contradiction. Therefore
\(u_2\notin V(\mathcal P)\). But then
$\mathcal P\cup\{\{u_1,u_2,u_3\}\}$
is an \((m+1)K_3\)-packing in \(G\), again a contradiction.

All cases lead to contradictions. Therefore no such component \(C\) exists.
\end{proof}

We now count edges. Let \(\mathcal A\) be the set of components of \(F\)
which contain a cycle, and let \(\mathcal B\) be the set of tree components
of \(F\). For every \(A\in\mathcal A\), since \(A\) is connected and
contains a cycle,
$e_F(A)\ge |V(A)|.$
For every \(B\in\mathcal B\), we have
$e_F(B)=|V(B)|-1.$
Moreover, by Claim~\ref{claim:lower-no-bad-tree-component}, each such
\(B\) has at least one edge to \(R\). Choosing one such edge for each tree
component, and noting that these chosen edges are distinct, we get
\[
\begin{aligned}
    e(F)+e_G(T,R)
    &\ge
    \sum_{A\in\mathcal A} e_F(A)
    +
    \sum_{B\in\mathcal B}\bigl(e_F(B)+1\bigr)  \\
    &\ge
    \sum_{A\in\mathcal A}|V(A)|
    +
    \sum_{B\in\mathcal B}|V(B)|  \\
    &= |T|=3m.
\end{aligned}
\]
The fixed triangles \(T_1,\ldots,T_m\) contribute exactly \(3m\) edges.
Therefore
\[
\begin{aligned}
    e_G(W)
    &\ge 3m+e(F)+e_G(T,R) \\
    &\ge 3m+3m \\
    &=6m.
\end{aligned}
\]

Finally, since \(z\) is universal and \(v\) has no neighbour in \(W\), we
have
$e(G)=d_G(z)+e_G(W)=(n-1)+e_G(W).$
Thus
$e(G)\ge n-1+6m=n+6m-1.$
\end{proof}

Now we prove Theorem~\ref{thm:saturation-number}.
\begin{proof}
    Let $G \in \Sat((m+1)K_3, n)$. 
    Since $n \ge 9m +5$,  by Proposition~\ref{prop:large-n-excludes-delta-two},
    $\delta(G) = 1$.
    Let $v$ be a vertex of degree $1$,
    by Lemma~\ref{lemma:triangle-case-sharper-threshold}, $e(G) \ge n+6m-1$.
    On the other hand,
    by Lemma~\ref{lem:standard-upper-bound}, $e(G) \le n + 6m -1$.
    Then $\sat((m+1)K_3, n) = e(G) = n+6m-1.$
\end{proof}

\section{Deficiency of Components in \(G-Z\)}

In this section, we prove Theorem~\ref{thm:main-structural}.  Let \(Z\)
be the set of universal vertices of an \((m+1)K_3\)-saturated graph \(G\).
We study the \(K_3\)-packing deficiency
$|V(C)|-3\nu(C)$
of each component \(C\) of \(G-Z\).  We first show that every such component
has positive deficiency, and then prove that, when \(G-Z\) has at least two
components, a component with deficiency one must be complete.

\begin{lemma}[Lower bound on component deficiency]\label{lem:component-deficiency}
Let $G$ be an $(m+1)K_3$-saturated graph, and let $Z\neq \emptyset$ be the set of universal vertices of $G$.
Then every connected component $C$ of $G-Z$ satisfies
\[
|V(C)|-3\nu(C)\ge 1.
\]
Equivalently, no connected component of $G-Z$ admits a perfect $K_3$-packing.
\end{lemma}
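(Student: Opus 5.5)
The plan is to argue by contradiction: if some component $C$ of $G-Z$ had a perfect $K_3$-packing, we could turn the $(m+1)K_3$-packing created by a suitable added edge into an $(m+1)K_3$-packing of $G$ itself. The only facts needed are that $Z$ is a clique joined to everything, and that $C$ has no edges to $V(G)\setminus(V(C)\cup Z)$.

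\textbf{Step 1: find a non-edge $uv$ with $u\in V(C)$.} Suppose $\defi(C)=0$, so $|V(C)|=3\nu(C)$. Since $n\ge 3m+3$, $G$ is not complete.
\begin{itemize}
\item If $C$ is not complete, take $u,v\in V(C)$ non-adjacent.
\item If $C$ is complete and $G-Z$ has another component, take $u\in V(C)$ and $v$ in that other component.
\item If $C$ is complete and $G-Z=C$, then $G=Z\otimes C$ would be complete, which is impossible.
\end{itemize}
So such a non-edge always exists, with $v\notin Z$.

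\textbf{Step 2: set up the swap.} By saturation, $G+uv$ has an $(m+1)K_3$-packing $\mathcal P$, and some triangle of $\mathcal P$ contains $uv$. Let $\mathcal Q\subseteq\mathcal P$ be the triangles meeting $V(C)$; this includes the triangle through $uv$.

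Every triangle in $\mathcal Q$ lies in $V(C)\cup Z\cup\{v\}$. Indeed, in $G+uv$ the vertices of $C$ have neighbours only in $V(C)\cup Z$, plus $v$ for the vertex $u$.

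Write $S:=V(\mathcal Q)\cap V(C)$ and $Y:=V(\mathcal Q)\cap Z$. Let $\eta=1$ if $v\notin V(C)$, and $\eta=0$ otherwise. Then
\[
3|\mathcal Q|=|S|+|Y|+\eta\le |V(C)|+|Y|+\eta .
\]

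\textbf{Step 3: replace $\mathcal Q$ inside $G$.}
\begin{itemize}
\item Use a perfect $K_3$-packing of $C$, which gives $|V(C)|/3$ triangles.
\item The set $Y\cup\{v\}$ when $\eta=1$, or $Y$ when $\eta=0$, induces a clique in $G$, because $Z$ is universal. It therefore contains $\lfloor (|Y|+\eta)/3\rfloor$ disjoint triangles.
\end{itemize}
None of these triangles uses the edge $uv$. All of them lie in vertices previously covered by $\mathcal Q$, or in $V(C)$, which no triangle of $\mathcal P\setminus\mathcal Q$ touches. Hence they are disjoint from $\mathcal P\setminus\mathcal Q$.

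The number of new triangles is at least
\[
\frac{|V(C)|}{3}+\frac{|Y|+\eta-2}{3}\ge |\mathcal Q|-\frac23 .
\]
Since both sides are integers, the new count is at least $|\mathcal Q|$. Combining the new triangles with $\mathcal P\setminus\mathcal Q$ gives an $(m+1)K_3$-packing of $G$, contradicting $(m+1)K_3$-freeness. Thus $\defi(C)\ge 1$.

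\textbf{Main obstacle.} The delicate point is the counting in Step 3 when $v$ lies outside $C$. There $\mathcal Q$ uses one vertex beyond $C\cup Z$. The fix is to recycle $v$ together with the freed $Z$-vertices, using that $v$ is adjacent to all of $Z$, so that the rounding loss stays below $1$. I also need to check that the argument is valid when $Y$ is empty or small. It is, because the inequality above only uses $\lfloor x\rfloor\ge x-2/3$ for $x\in\frac13\mathbb Z$, together with integrality.
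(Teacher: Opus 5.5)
Your proof is correct. Its core is the same exchange argument as the paper's Case~1: take the $(m+1)K_3$-packing of $G+uv$, discard the triangles meeting $C$, and replace them by a perfect $K_3$-packing of $C$ plus triangles in the clique formed by the freed $Z$-vertices (together with $v$). Integrality then absorbs the rounding loss.

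The difference is the case split. The paper splits on the number of components of $G-Z$. With at least two components, it uses a non-edge between components. When $G-Z=C$, it switches to a global count, $|V(G)|-3m\le |Z|-3\lfloor |Z|/3\rfloor\le 2$, which contradicts the order bound $|V(G)|\ge 3m+3$ derived from saturation. You split instead on whether $C$ is complete. You observe that $G-Z=C$ forces $C$ to be non-complete, and then run the same swap on an internal non-edge of $C$. That is exactly the mechanism the paper reserves for Lemma~\ref{lemma:completenessOf1compnenent}.

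Your route is uniform: one local exchange covers every case, and it uses only the $(m+1)K_3$-freeness of $G$, with no appeal to the global vertex count. In fact your invocation of $n\ge 3m+3$ in Step~1 is unnecessary. A nonempty $G-Z$ already means $G$ is not complete, since vertices of a complete $G$ would all lie in $Z$. The paper's route gains a very short treatment of the connected case, without having to locate a non-edge inside $C$.
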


\begin{proof}
Suppose to the contrary that some connected component $C$ of $G-Z$ admits a perfect $K_3$-packing.
Write
\[
|V(C)|=3t,
\qquad
\nu(C)=t
\]
for some integer $t\ge 1$.

Since $C$ is a component of $G-Z$, the graph $G$ is not complete, then $G$ has a non-edge $xy$.
Because $G$ is $(m+1)K_3$-saturated, the graph $G+xy$ contains an $(m+1)K_3$-packing.
At most one triangle in such a packing uses the added edge $xy$, so by deleting that triangle we obtain an $mK_3$-packing in $G$.
Since $G$ itself is $(m+1)K_3$-free, it follows that
$\nu(G)=m.$
Moreover, since $G$ has a non-edge and $G+xy$ contains an $(m+1)K_3$-packing, necessarily
$|V(G)|\ge 3m+3.$

We now distinguish two cases.

\medskip
\noindent\textbf{Case 1.} $G-Z$ has at least two connected components.

Let $D\neq C$ be another connected component of $G-Z$, and choose $x\in V(C)$, $y\in V(D)$.
Then
$xy\notin E(G).$
Fix an $(m+1)K_3$-packing $P'$ in $G+xy$.

Let $Q\subseteq P'$ be the set of all triangles in $P'$ that intersect $V(C)$, and write
$q:=|Q|.$
Define
\[
S:=V(Q)\setminus V(C),
\qquad
s:=|S|.
\]

We claim that
$S\subseteq Z\cup\{y\}.$
Indeed, in $G+xy$, the only edge joining $C$ and $D$ is the added edge $xy$.
Hence at most one triangle of $Q$ can meet $D$, namely the triangle containing $xy$.
Any other triangle of $Q$ that is not contained entirely in $C$ must use vertices outside $C$ coming from the universal set $Z$.
Thus the claim follows.

Since
$S\subseteq Z\cup\{y\},$
and $Z$ is a clique of universal vertices, it follows that $G[S]$ is a clique.
Therefore
$\nu(G[S])=\left\lfloor \frac{s}{3}\right\rfloor.$

On the other hand, each triangle in $Q$ has three vertices, and the number of vertices of $Q$ lying in $C$ is at most $|V(C)|=3t$.
Hence
$3q = |V(Q)\cap V(C)| + |S| \le 3t+s,$
so
$q\le t+\frac{s}{3}.$
Since $q$ is an integer, we obtain
$q\le t+\left\lfloor \frac{s}{3}\right\rfloor.$

Now, $C$ has a perfect $K_3$-packing, so $G[V(C)]$ contains $t$ pairwise vertex-disjoint triangles.
Together with the
$\left\lfloor \frac{s}{3}\right\rfloor$
pairwise vertex-disjoint triangles in $G[S]$, we obtain
$t+\left\lfloor \frac{s}{3}\right\rfloor$
pairwise vertex-disjoint triangles in $G[V(C)\cup S]$.
Hence
\[
\nu(G[V(C)\cup S])\ge t+\left\lfloor \frac{s}{3}\right\rfloor \ge q.
\]

Since $P'\setminus Q$ is vertex-disjoint from $V(C)\cup S$, we deduce that
\[
\begin{aligned}
\nu(G)
&\ge \nu(G[V(C)\cup S])+\nu(P'\setminus Q) \\
&\ge q + (m+1-q) \\
&= m+1,
\end{aligned}
\]
contradicting $\nu(G)=m$.

\medskip
\noindent\textbf{Case 2.} $G-Z$ is connected.

Then
$C=G-Z.$
Since $C$ admits a perfect $K_3$-packing and $Z$ induces a clique, $G$ contains at least
$t+\left\lfloor \frac{|Z|}{3}\right\rfloor$
pairwise vertex-disjoint triangles. Therefore
$\nu(G)\ge t+\left\lfloor \frac{|Z|}{3}\right\rfloor.$
Using $\nu(G)=m$ and $|V(G)|=3t+|Z|$, we obtain
\[
\begin{aligned}
|V(G)|-3m
&= |V(G)|-3\nu(G) \\
&\le (3t+|Z|)-3\left(t+\left\lfloor \frac{|Z|}{3}\right\rfloor\right) \\
&= |Z|-3\left\lfloor \frac{|Z|}{3}\right\rfloor \\
&\in \{0,1,2\}.
\end{aligned}
\]
On the other hand, we already know that $|V(G)|\ge 3m+3$, i.e.,
$|V(G)|-3m\ge 3,$
a contradiction.

Both cases lead to contradictions. Therefore every connected component $C$ of $G-Z$ satisfies
$|V(C)|-3\nu(C)\ge 1.$
\end{proof}

\begin{lemma}[Completeness of deficiency-$1$ components]\label{lemma:completenessOf1compnenent}
Let $G$ be an $(m+1)K_3$-saturated graph, and let $Z$ be the set of
universal vertices of $G$. If $C\in \mc(G-Z)$ satisfies $\defi(C)=1$,
then $C$ is complete.
\end{lemma}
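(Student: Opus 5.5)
The plan is to argue by contradiction and reuse the exchange argument from Case~1 of Lemma~\ref{lem:component-deficiency}. The difference is that the missing edge now lies \emph{inside} $C$, and the single vertex that a maximum packing of $C$ leaves uncovered is absorbed into the clique $Z$.

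Suppose $C$ is not complete, and pick $x,y\in V(C)$ with $xy\notin E(G)$. Write $|V(C)|=3t+1$ with $\nu(C)=t$. As in the proof of Lemma~\ref{lem:component-deficiency}, saturation gives $\nu(G)=m$, and $G+xy$ contains an $(m+1)K_3$-packing $P'$. Let $Q\subseteq P'$ be the set of triangles meeting $V(C)$, let $q:=|Q|$, and let $S:=V(Q)\setminus V(C)$ with $s:=|S|$.

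The first step is to check that $S\subseteq Z$. In $G$ the only edges leaving $C$ go to $Z$, since $C$ is a component of $G-Z$. The added edge $xy$ has both ends in $C$, so it creates no new edge leaving $C$. Hence every vertex of a triangle of $Q$ outside $C$ is adjacent to a vertex of $C$ and lies in $Z$.

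The counting step is
\[
3q=|V(Q)\cap V(C)|+s\le 3t+1+s,
\qquad\text{so}\qquad
q\le t+\left\lfloor \tfrac{s+1}{3}\right\rfloor .
\]

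The key step is to realize this many disjoint triangles inside $G[V(C)\cup S]$ using only edges of $G$. Take a maximum packing of $C$, which consists of $t$ disjoint triangles and leaves exactly one vertex $u$ uncovered, since $\defi(C)=1$. Every vertex of $S\subseteq Z$ is universal, so $S\cup\{u\}$ induces a clique on $s+1$ vertices. That clique contains $\lfloor (s+1)/3\rfloor$ disjoint triangles, all vertex-disjoint from the $t$ triangles inside $C$. Therefore
\[
\nu\bigl(G[V(C)\cup S]\bigr)\ge t+\left\lfloor \tfrac{s+1}{3}\right\rfloor\ge q .
\]

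Finally, the triangles of $P'\setminus Q$ avoid $V(C)\cup S$ and do not use $xy$, because $xy$ lies inside $C$. So they form $m+1-q$ triangles of $G$, and combining them with the $q$ triangles above gives $\nu(G)\ge m+1$. This contradicts $\nu(G)=m$, so $C$ is complete.

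The only delicate point is verifying $S\subseteq Z$ and that $u$ is adjacent to all of $S$; both follow from universality. The argument does not use that $G-Z$ has at least two components, and it works even when $Z=\emptyset$, since then $s=0$.
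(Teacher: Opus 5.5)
Your proposal is correct and follows essentially the same argument as the paper. Both pick a non-edge $xy$ inside $C$, let $Q$ be the triangles of the $(m+1)K_3$-packing of $G+xy$ that meet $C$, show $S\subseteq Z$, bound $q\le \lfloor (|V(C)|+|S|)/3\rfloor$, and rebuild that many triangles of $G$ inside $G[V(C)\cup S]$. The only difference is cosmetic: the paper splits into three cases on $|S| \bmod 3$ and uses the uncovered vertex $u$ only when the remainder is $2$, whereas you treat all cases at once by noting that $S\cup\{u\}$ is a clique on $s+1$ vertices.
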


\begin{proof}
Suppose for contradiction that $C$ is not complete. 
Then there exist two vertices $x,y\in V(C)$ such that
$xy\notin E(G).$
Since $G$ is $(m+1)K_3$-saturated, adding the edge $xy$ creates an $(m+1)K_3$-packing in $G+xy$.
Fix such a packing and denote it by $P'$.

Let $Q\subseteq P'$ be the set of all triangles that intersect $C$, and write
$|Q|=q.$
Then
$\nu(P'\setminus Q)=m+1-q.$
Set
$S:=V(Q)\setminus V(C).$

Since $C$ is a connected component of $G-Z$, every vertex in $S$ must belong to $Z$. Hence
$S\subseteq Z.$
Moreover, each triangle in $Q$ contributes exactly three vertices, so
$3q=|V(Q)|.$
Since
$V(Q)=\bigl(V(Q)\cap V(C)\bigr)\,\dot\cup\,\bigl(V(Q)\setminus V(C)\bigr),$
we obtain
$3q=|V(Q)|\le |V(C)|+|S|,$
and therefore
\begin{equation}\label{ieq:lemma6-1}
    q\le \left\lfloor \frac{|V(C)|+|S|}{3}\right\rfloor.
\end{equation}

We next show that $\nu(G[C\cup S]) \ge \left\lfloor \frac{|V(C)|+|S|}{3}\right\rfloor$.
Write
$\nu(C)=t.$, 
by $\defi(C) = 1$, 
$|V(C)|=3t+1.$
Let $\mathcal T$ be a maximum $K_3$-packing of $C$. 
Then $|\mathcal T|=t$, and $\mathcal T$ covers exactly $3t$ vertices of $C$, so there is a unique vertex $u\in V(C)$ uncovered by $\mathcal T$.

Now write $|S|=3a+r$, $r\in\{0,1,2\}$.
Since $S\subseteq Z$, the graph $G[S]$ is complete, and every vertex of $S$ is adjacent to every vertex of $C$.

\medskip
\noindent
\textbf{Case 1:} $r=0$.

In this case, $S$ contains $a$ pairwise vertex-disjoint triangles. Together with $\mathcal T$, we obtain
$\nu(G[C\cup S])\ge t+a
=
\left\lfloor \frac{3t+1+3a}{3}\right\rfloor
=
\left\lfloor \frac{|V(C)|+|S|}{3}\right\rfloor.$

\medskip
\noindent
\textbf{Case 2:} $r=1$.

Again, $S$ contains $a$ pairwise vertex-disjoint triangles. Together with $\mathcal T$, we obtain
$\nu(G[C\cup S])\ge t+a
=
\left\lfloor \frac{3t+1+3a+1}{3}\right\rfloor
=
\left\lfloor \frac{|V(C)|+|S|}{3}\right\rfloor.$

\medskip
\noindent
\textbf{Case 3:} $r=2$.

Choose $a$ pairwise vertex-disjoint triangles in $S$, and let $s_1,s_2$ be the two remaining vertices of $S$.
Since $s_1,s_2\in S\subseteq Z$ and $u\in V(C)$, we have
\[
us_1,\,us_2,\,s_1s_2\in E(G).
\]
Thus, $us_1s_2$ forms a triangle. Together with $\mathcal T$ and the $a$ triangles chosen from $S$, we obtain
$\nu(G[C\cup S])\ge t+a+1
=
\left\lfloor \frac{3t+1+3a+2}{3}\right\rfloor
=
\left\lfloor \frac{|V(C)|+|S|}{3}\right\rfloor.$

\medskip
In all three cases,
\begin{equation}\label{ieq:lemma6-2}
    \nu(G[C\cup S])\ge \left\lfloor \frac{|V(C)|+|S|}{3}\right\rfloor.
\end{equation}

Note that $P'\setminus Q$ is vertex-disjoint from $C\cup S$. Hence
$\nu(G) \ge
\nu\Bigl(G\bigl[(V(C)\cup S)\,\dot\cup\,V(P'\setminus Q)\bigr]\Bigr)\ge
\nu(G[C\cup S])+(m+1-q)$,
combining~\eqref{ieq:lemma6-1} and~\eqref{ieq:lemma6-2},
we obtain
\[
\nu(G)
\ge
\left\lfloor \frac{|V(C)|+|S|}{3}\right\rfloor +(m+1-q)
\ge m+1,
\]
contradicting the assumption that $\nu(G)=m$.

Therefore, $C$ must be complete.
\end{proof}

\begin{remark}
As a consequence, if $|\mathcal C(G-Z)|=1$, then no component $C$ of $G-Z$
can satisfy $\defi(C)=1$. Indeed, if such a component existed, then by
Lemma~\ref{lemma:completenessOf1compnenent}, $C$ would be complete.
Then $Z = V(G)$, $\mc(G-Z) = \emptyset$.
\end{remark}

The preceding two lemmas together give the proof of Theorem~\ref{thm:main-structural}

\begin{proof}
Since $|\mc(G-Z)|\ge 2$, for any component $C \in \mc(G-Z)$,
$|V(C)| - 3\nu(C) \ge 1$ follows from Lemma~\ref{lem:component-deficiency}.
If $|V(C)| - 3\nu(C) = 1$, the completeness of $C$ follows from Lemma~\ref{lemma:completenessOf1compnenent}.
\end{proof}

\section{Conclusion}

In this paper, we studied saturation numbers for disjoint triangles. 
We constructed a family of
\((m+1)K_3\)-saturated graphs which gives the uniform upper bound
$\sat((m+1)K_3,n)=O(n^{3/2})$
for all \(m\ge 1\) and \(n\ge 3m+3\), with an absolute implicit constant.
We also proved the exact value
$\sat((m+1)K_3,n)=n+6m-1$
for \(n\ge 9m+5\), improving the previously known large-order threshold in
the triangle case in~\cite{FAUDREE20095870}. Finally, we established structural restrictions on the
components of \(G-Z\), where \(Z\) is the set of universal vertices, showing
that such components must have positive \(K_3\)-packing deficiency and that
deficiency-one components are complete. These restrictions are consistent
with the structure of the extremal saturated graphs arising from our
construction, suggesting that the same exact formula may remain valid beyond
the range established here.

\bibliographystyle{plain} 
\bibliography{refs}  
  

\end{document}